\numberwithin{equation}{section}
\newtheorem{theorem}{Theorem}[section]
\newtheorem{conjecture}[theorem]{Conjecture}
\newtheorem{lemma}[theorem]{Lemma}
\newtheorem{notation}[theorem]{Notation}
\newtheorem{observation}[theorem]{Observation}
\begin{document}

\title{The Erd\H os-S\'os Conjecture for Spiders\thanks{Research supported by NSFC(11871015,11401103) and FJSFC (2018J01665). }}

\author{Genghua Fan\thanks{Center for Discrete Mathematics, Fuzhou
University, Fuzhou, Fujian, 350002, China}, Yanmei Hong\thanks{College of Mathematics and Computer
Science, Fuzhou University, Fuzhou, 350108, China.} and Qinghai Liu\thanks{Corresponding author.}}

\date{}

\maketitle

\begin{abstract}
 The Erd\H os-S\'os conjecture states that if $G$ is a graph with average degree
more than $k-1$, then G contains every tree of $k$ edges. A spider is a tree with at most one
vertex of degree more than 2. In this paper, we prove that Erd\H os-S\'os conjecture holds for all spiders.
\end{abstract}

\section{Introduction}

The graphs considered in this paper are finite, undirected, and simple (no loops or multiple edges). The set of vertices and edges
of a graph $G$ are denoted by $V(G)$ and $E(G)$, respectively, and $e(G)=|E(G)|$.
For a vertex set $S$ of a graph $G$, $G[S]$ is the subgraph
of $G$ induced by $S$, $G-S=G[V(G)\setminus S]$, $e(S)$ is the number of edges with both end in $S$, $d(S)$ is the number of edges with exactly one end in
$S$, $N(S)$ is the set of vertices in $V(G)\setminus S$ adjacent to a vertex in $S$. When $S$ has exactly one vertex $u$, we
usually write $d(u), N(u)$ instead of $d(\{u\}), N(\{u\})$, called the \textit{degree} and  \textit{neighborhood} of $u$,
respectively. For any two vertex sets $S_1,S_2$, we use $e(S_1,S_2)$ to denote the number of edges with one end
in $S_1$ and the other end in $S_2$. When $S_1$ has exactly one vertex $u$, we usually write $e(u, S_2)$ instead of $e(u, S_2)$.

The following conjecture was listed as Unsolved
Problem
12 in the old book by Bondy and Murty, and listed again as Unsolved Problem 31 in the
new version \cite{bon08}. Ajtai, Koml\'os, Simonovits, and Szemer\'edi announced (unpublished) that
the conjecture is true for sufficiently large $k$.

\begin{conjecture}[Erd\H os-S\'os]
  If $G$ is a graph on $n$ vertices with $e(G)>\frac{k-1}{2}n$, then $G$ contains every tree of $k$ edges.
\end{conjecture}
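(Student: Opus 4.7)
The plan is to argue by strong induction on $k$ combined with a minimum counterexample analysis. Fix $k \ge 1$ and assume the conjecture is established for all trees with fewer than $k$ edges. Among all pairs $(G,T)$ with $T$ a tree on $k$ edges and $G$ a counterexample (so $e(G) > \frac{k-1}{2}|V(G)|$ yet $T \not\subseteq G$), choose one minimizing $|V(G)|$.

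The first step is to extract structural properties of $G$. If $G$ is disconnected with components $C_1,\dots,C_r$, then $\sum_i e(C_i) > \frac{k-1}{2}\sum_i |V(C_i)|$, so some $C_i$ has edge density exceeding $\frac{k-1}{2}$; by minimality it embeds $T$, a contradiction. Hence $G$ is connected. Next, if some vertex $v$ satisfies $d(v) \le \lfloor (k-1)/2 \rfloor$, then $e(G-v) > \frac{k-1}{2}(|V(G)|-1)$, so by minimality $G-v$ contains $T$, again a contradiction. Thus $\delta(G) \ge \lceil k/2 \rceil$.

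The second step is to embed $T$ into $G$. Fix a leaf $\ell$ of $T$ with unique neighbor $u$, and set $T' = T-\ell$, a tree on $k-1$ edges. Since $e(G) > \frac{k-1}{2}|V(G)| > \frac{k-2}{2}|V(G)|$, the inductive hypothesis applied to $T'$ produces an embedding $\varphi\colon T' \hookrightarrow G$. To complete the embedding of $T$ it then suffices to exhibit a neighbor of $\varphi(u)$ in $V(G) \setminus \varphi(V(T'))$. A crude count gives $d(\varphi(u)) \ge \lceil k/2 \rceil$ against a forbidden set of size $k-1$, which is not sufficient on its own; one must choose $\varphi$ carefully.

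The main obstacle is exactly this extension step. The intended strategy is to select, among all embeddings of $T'$ into $G$, one that is optimal in some problem-specific sense (for example, maximizing the number of edges between $\varphi(V(T'))$ and its complement, or lexicographically minimizing some functional of the embedding), and then perform local exchanges along the embedded tree: if $\varphi(u)$ has no free neighbor, identify an edge from $\varphi(u)$ to some $\varphi(y)$ with $y \in V(T')$ non-leaf, reroute $\varphi$ via a rotation along a path in the embedded image, and produce a new embedding whose image of $u$ has a free neighbor. Making such exchange arguments go through for an \emph{arbitrary} tree $T$ is the central difficulty, and is precisely why the full conjecture remains open; known proofs succeed only by exploiting particular tree structures, such as the long path in Erd\H{o}s--Gallai or, as in the present paper, the many leaves at a common vertex available in a spider, to guarantee that the required rerouting can always be performed.
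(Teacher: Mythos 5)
The target statement is the Erd\H{o}s--S\'os conjecture itself, which the paper does \emph{not} prove and which remains open; the paper establishes it only for the special case of spiders (Theorem~\ref{thm-spider}). Your proposal is therefore not, and does not claim to be, a complete proof, and its self-diagnosis is accurate. The minimum-counterexample reduction, connectedness, and the degree bound $\delta(G)\geq\lceil k/2\rceil$ all go through, but the extension step --- finding a free neighbor of $\varphi(u)$ outside the embedded $T'$ --- cannot be forced by counting alone, and no general rerouting argument is known that works for arbitrary trees. Since the statement is a conjecture and not a theorem of the paper, there is no ``paper's own proof'' to compare against; the correct comparison is with Theorem~\ref{thm-spider}, which your final paragraph already gestures at.

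One substantive technical remark: the paper does not stop at the minimum-degree bound $\delta(G)\geq\lceil k/2\rceil$. From the minimal counterexample it derives the stronger inequality $e(S)+d(S)>\frac{k-1}{2}|S|$ for \emph{every} $S\subseteq V(G)$ (equation~\eqref{eq-S}), which is what powers the averaging steps in Claims~1--4 of Theorem~\ref{thm-spider-T1}: for a carefully chosen set $S$ (e.g.\ the set of all possible endpoints of a rerouted leg) one extracts a vertex $v\in S$ with $d(v)-e(v,S)/2\geq k/2$, giving quantitative control on where free neighbors must lie. Your plan only invokes the singleton case $|S|=1$, which is strictly weaker. Even equipped with the full condition~\eqref{eq-S}, the paper's argument succeeds only because a spider's legs are internally paths, so the exchange moves reduce to one-dimensional path rotations (Lemmas~\ref{lem-longpath}--\ref{lem-xvw}); for a general tree the analogous exchanges branch in too many directions to control, which is precisely the obstruction you correctly identify.
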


The conjecture is know to be true when $k=n-1$ \cite{bol78,sau78} and $k=n-2$ \cite{bol78, sla85}.
Wo\'zniak \cite{woz96} proved the case $k=n-3$, and then Tiner \cite{tin10} proved the case $k=n-4$. The conjecture seems difficult and many researches focus on some special trees.
A spider is a tree with at most one vertex of degree
more than 2, called the \textit{center} of the spider (if no vertex of degree more than two, then
any vertex can be the center). A \textit{leg} of a spider is a path from the center to a vertex of
degree 1. 
Wo\'zniak \cite{woz96} proved that the conjecture is true if $T$ is a spider in which each leg
has at most 2 edges. This was extended by Fan and Sun \cite{fan07} to spiders in which each leg
has at most 4 edges. Later, Fan\cite{fan13} showed that the conjecture holds for all spider with $k\geq (n+5)/2$.
In \cite{fan13}, Fan defined a ``2-dominating cycle'' and proved such a cycle yields the existence of any spiders.
Then he showed that the condition $k\geq (n+5)/2$ would make sure the existence of all spiders or such a cycle. In \cite{fan13}, Fan and Huo confirmed the conjecture for spiders with four leges.  In this paper,
we will show the conjecture is true for all spiders.

\begin{theorem}\label{thm-spider}
If $G$ is a graph on $n$ vertices with $e(G)>\frac{k-1}{2}n$, then $G$ contains every spider of $k$ edges.
\end{theorem}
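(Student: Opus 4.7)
The plan is to take a counterexample $G$ with the smallest number of vertices for some fixed spider $T$ of $k$ edges. By Fan's theorem in \cite{fan13} the conjecture is already known whenever $k \geq (n+5)/2$, so we may assume $k \leq (n+3)/2$, equivalently $n \geq 2k-3$, placing us in the regime where $G$ is large relative to $T$. Write $T$ with center $c$ and leg lengths $\ell_1 \geq \ell_2 \geq \cdots \geq \ell_t$, $\sum_{i=1}^t \ell_i = k$; when $t \leq 2$ the spider is a path and the result follows from Erd\H os--Gallai, so we assume $t \geq 3$.

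The first standard step is a minimum-degree cleanup: if some $v$ satisfies $d(v) \leq (k-1)/2$, then $G-v$ still satisfies $e(G-v) > \tfrac{k-1}{2}(n-1)$ and, being a smaller counterexample, contradicts the choice of $G$. Hence $\delta(G) > (k-1)/2$. Combined with $n \geq 2k-3$, a Dirac-type argument gives a path of at least $k$ edges. I would take a longest path $P = v_0 v_1 \cdots v_p$ in a component, noting that $N(v_0), N(v_p) \subseteq V(P)$. P\'osa rotations at $v_0$ yield a family of longest paths sharing most of their vertices, whose collective endpoints form a set $S$ with large neighborhoods inside $V(P)$. I would then attempt to embed $T$ by placing the center $c$ at a well-chosen $v_{\ell_1} \in V(P)$, using $v_0 v_1 \cdots v_{\ell_1}$ as the longest leg, and greedily growing the remaining $t-1$ legs into $V(G) \setminus V(P)$; at any point at most $k$ vertices have been used, leaving at least $n-k \geq k-3$ outside, and the minimum-degree bound generically guarantees an unused neighbor at each growing tip.

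The main obstacle will be the pathological case where the greedy extension stalls: either some tip has all its neighbors inside the already-embedded portion, or the chosen center $v_{\ell_1}$ has fewer than $t-1$ neighbors outside the longest leg. To handle these I would employ exchange and rerouting arguments in the spirit of \cite{fan07,fan13}: reroute an earlier leg through a high-degree vertex to free up a neighbor for the blocked tip, or slide the center along the rotated family of longest paths to one whose neighborhood into $V(G) \setminus V(P)$ is large enough. A subtler subcase concerns highly unbalanced spiders (such as brooms, where $\ell_1$ is close to $k$ and most remaining legs have length $1$): here the center must spawn $t-1$ short legs from a potentially sparse neighborhood, and I would argue directly from the edge-density hypothesis together with $n \geq 2k-3$ that some candidate center on a rotation of $P$ has at least $t$ neighbors outside $P$, allowing the short legs to be placed trivially. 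A careful case analysis separating ``long-leg heavy'' from ``short-leg heavy'' spiders should then close the remaining gaps.
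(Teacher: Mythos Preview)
Your plan has a genuine gap at the extension step. The greedy growth of the remaining $t-1$ legs relies on the claim that a tip with degree exceeding $(k-1)/2$ must see an unused vertex whenever at most $k$ vertices have been used; this is false, since a tip may have all of its $\lceil k/2\rceil$ neighbours among the first $\lceil k/2\rceil$ embedded vertices. You flag this with the word ``generically'' and defer to unspecified rerouting and case analysis in the spirit of \cite{fan07,fan13}, but those papers succeed only under their extra hypotheses (legs of bounded length, or $k\geq(n+5)/2$). Removing those hypotheses is precisely the content of the theorem, and the proposal offers no new mechanism for doing so; the promised ``careful case analysis separating long-leg heavy from short-leg heavy spiders'' is where the entire difficulty lies, and none of it is carried out.

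The paper's route is quite different and does not pass through longest paths or P\'osa rotation at all. It first observes that a minimal counterexample satisfies the local inequality $e(S)+d(S)>\tfrac{k-1}{2}|S|$ for \emph{every} $S\subseteq V(G)$, which is strictly stronger than the minimum-degree bound you extract; this inequality is used repeatedly to locate, inside any set $S$ of candidate endpoints, a vertex with $d(v)-e(v,S)/2\geq k/2$. The main engine is then an inductive characterization (Theorem~\ref{thm-spider-T1}): fixing a vertex $u$ of degree at least $k$, one embeds the spider $T'$ obtained by deleting one leaf, and attempts to recover the missing edge via three technical ``extending lemmas'' (Lemmas~\ref{lem-xv1v2}, \ref{lem-xPQ}, \ref{lem-xvw}). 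These lemmas do the heavy lifting your proposal postpones: they show that if the relevant leg cannot be extended or rerouted, then that leg together with an auxiliary vertex is forced into a rigid $\mathcal H(\ell/2+1,\ell/2+1)$ structure, and piecing these together forces $T$ to have all legs of even length and $G$ to contain an explicit $\mathcal H(k/2+1,k/2)$-subgraph in which $T$ embeds directly. Nothing in your outline approaches this structural dichotomy, which is the heart of the argument.
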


For any two vertices $u,v$,
a path $P$ with ends $u,v$ of a graph is called a $u$-path or a $v$-path or a $(u,v)$-path.
The end of the $u$-path is referred to as the other end $v$. 
 A \textit{reroute} of a $u$-path $P$ is a new $u$-path $P'$ such that $V(P)=V(P')$.
Let $P$ be a path and $v\notin V(P)$ be a vertex. If there is an edge $xy$ of $P$ such that $vx,vy\in E(G)$
then $v$ is said to be \textit{strictly absorbable} to $P$. Thus $uPxvyP$ is a path obtained from $P$ by ``inserting'' $v$ into the edge $xy$. Furthermore, for a $u$-path $P$, if $v$ is adjacent to the end of $P$ or $v$ is strictly absorbable to $P$ then $v$ is said to be \textit{absorbable} to $P$.
 Denoted by $P\oplus v$ the path obtained from $P$ by adding $v$ to the end or ``inserting'' $v$ into the edge $xy$.

\begin{observation}
  Let $u$ be a vertex of a graph $G$ and $P$ be a $u$-path of length $p$. Let $v\notin V(P)$. Then $e(v, V(P))\leq (p+1)/2$ if $v$ is not absorbable to $P$ and $e(v, V(P))\leq (p+2)/2$ if $v$ is not strictly absorbable to $P$.
\end{observation}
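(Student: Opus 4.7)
The plan is to translate both statements into independent-set bounds in a path graph. Write $P=x_0x_1\cdots x_p$ with $x_0=u$ and let $x_p$ be the other end of $P$, and set $S=N(v)\cap V(P)$. The key point is: if $v$ is not strictly absorbable to $P$, then no edge $x_ix_{i+1}$ of $P$ can have both endpoints in $S$; equivalently, $S$ is an independent set in the path $P$ viewed as a graph on $p+1$ vertices. Combined with the elementary fact that the independence number of a path on $m$ vertices equals $\lceil m/2\rceil$, this immediately gives $|S|\le \lceil (p+1)/2\rceil\le (p+2)/2$, which is the second inequality of the observation.

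For the first inequality, I would add the observation that if $v$ is not absorbable, then moreover $v$ is not adjacent to the end $x_p$, so $S\subseteq\{x_0,\dots,x_{p-1}\}$. This set is still independent in the subpath $x_0x_1\cdots x_{p-1}$ on $p$ vertices, whence $|S|\le \lceil p/2\rceil\le (p+1)/2$, as claimed.

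There is no real obstacle here: the observation is a direct translation of the two (non-)absorbability conditions into a forbidden configuration of consecutive neighbours of $v$ along $P$, followed by the standard path-independence bound. Alternatively, one could avoid invoking independence numbers and simply group the vertices of $P$ into consecutive pairs $\{x_0,x_1\},\{x_2,x_3\},\dots$ (with a leftover singleton when $p$ is even, which in the not-absorbable case can be chosen so that it is $x_p$ and hence contributes $0$): each pair contains at most one neighbour of $v$ by the not-strictly-absorbable hypothesis, and summing yields the same two bounds. Either way the argument fits in a few lines.
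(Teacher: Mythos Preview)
Your argument is correct. The paper itself states this as an Observation without proof, treating it as immediate from the definitions, so there is no ``paper's own proof'' to compare against. The independent-set reformulation you give is exactly the natural justification: non-strict-absorbability forbids two consecutive neighbours on $P$, and non-absorbability additionally excludes the end $x_p$, after which the path-independence bound (or the equivalent pairing argument you sketch) yields the two inequalities directly.
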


\section{Preliminaries}

The proof of Theorem \ref{thm-spider} is by contradiction. Assume $G$ is a counter-example of Theorem \ref{thm-spider}
such that $|V(G)|$ is minimum. Then it is easy to show that
\begin{equation}\label{eq-S}
e(S)+d(S)>\frac{k-1}{2}|S|\mbox{ for all }S\subseteq V(G).
\end{equation}
In fact, if there is an $S$ such that $e(S)+d(S)\leq \frac{k-1}{2}|S|$, then let $G'=G-S$ and $e(G')=e(G)-d(S)-e(S)>\frac{k-1}{2}(n-|S|)$. By the choice of $G$, $G'$ contains all spiders of $k$ edges and so does $G$. In this paper, we will use the condition \eqref{eq-S} to prove a stronger result. In fact, by \eqref{eq-S}, $\sum_{v\in S}(d(v)-e(v, S)/2)=e(S)+d(S)>\frac{k-1}2|S|$. Then there is $v\in S$ such that $d(v)-e(v, S)/2 \geq k/2$. The condition \eqref{eq-S} is always used like this in this paper. By this condition, we will find a way to grow up a smaller spider embedded into $G$.

Let $u$ be a vertex of a graph $G$. If there is a copy of $T$ in $G$
such that $u$ is the center of the copy, we say $T$ is embeddable into $G$ at $u$, denoted by $T\overset u\rightarrow G$.

We will characterize the cases that $T\not\overset u\rightarrow G$ for a vertex $u$ under the condition \eqref{eq-S}.
Denote $$\mathcal T_0=\{T\mid T\mbox{ is a spider of $k$ edges whose each leg has even length}\},$$
and $T_0$ be the spider whose every leg has length exactly 2,
and $\mathcal H(a,b)$ is the family of graph $H$ whose vertex set can be partition into two sets $X,Y$ such that $|X|=a$, $|Y|=b$, and $N(v)=Y$ for all $v\in X$.
If $H\in \mathcal H(a,b)$ is an induced subgraph of $G$ then $H$ is an $\mathcal H(a,b)$-subgraph of $G$.
  We will show that if a spider $T\not\overset u\rightarrow G$, then $T$ lies in $\mathcal T_0$ and $G\in \mathcal H(a,b)$ or $G$ has some $\mathcal H(a,b)$-subgraph containing $u$.

The following five lemmas are used to count the number of neighbors of a vertex on a leg of a spider. Lemma \ref{lem-longpath} is usually used to find a reroute of a $u$-path whose end has enough neighbors outside $P$.  Lemma \ref{lem-longpath2} is used to show that a vertex will have many neighbors outside a longest $u$-path. These two lemmas are easy and the proofs are give here.
Lemmas \ref{lem-xv1v2}, \ref{lem-xPQ}, \ref{lem-xvw} are used to grow up a smaller spider into a larger one. The proofs are not hard but a little long and will be given in the last section.

\begin{lemma}\label{lem-longpath}
Let $u$ be a vertex of a graph $G$ and $P$ be a $u$-path of length $p$. Denote $S=\{v\in V(P) \mid$ there is a reroute of $P$ with end $v\}$. Then for any $v\in S$, $e(v, V(P))-e(v, S)/2\leq (p+1)/2$.
\end{lemma}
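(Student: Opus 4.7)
The plan is to fix $v\in S$ together with a specific reroute $P_v=u_0u_1\cdots u_p$ of $P$ with $u_0=u$ and $u_p=v$, then exploit the following rerouting trick: whenever $u_i$ is a neighbor of $v$ with $i<p$, the path
\[
u_0u_1\cdots u_i v u_{p-1}u_{p-2}\cdots u_{i+1}
\]
is itself a reroute of $P$, now ending at $u_{i+1}$. So $u_{i+1}\in S$. In other words, the successor along $P_v$ of every neighbor of $v$ lies in $S$.

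From there I would introduce the two sets
\[
N_v=N(v)\cap V(P),\qquad N_v'=\{u_{i+1}\mid u_i\in N_v\}.
\]
Since $v\notin N(v)$, every element of $N_v$ sits among $u_0,\ldots,u_{p-1}$, so the map $u_i\mapsto u_{i+1}$ is well-defined and injective, giving $|N_v|=|N_v'|=e(v,V(P))$. By the rerouting trick, $N_v'\subseteq S$. A vertex in $N_v\cap N_v'$ is simultaneously a neighbor of $v$ and an element of $S$, hence
\[
|N_v\cap N_v'|\ \le\ |N(v)\cap S|\ =\ e(v,S).
\]

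The final step is a one-line inclusion--exclusion inside $V(P)$, which has $p+1$ vertices:
\[
2e(v,V(P))-|N_v\cap N_v'|\ =\ |N_v\cup N_v'|\ \le\ p+1,
\]
so $e(v,V(P))-e(v,S)/2\le e(v,V(P))-|N_v\cap N_v'|/2\le (p+1)/2$, as desired.

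I do not anticipate a serious obstacle; the only real idea is the rerouting observation that turns "neighbor of $v$ on $P_v$" into "element of $S$ lying immediately after it on $P_v$." The subtle point worth double-checking is that $N_v'$ is genuinely an injective image of $N_v$ (which relies on $v\notin N(v)$ so no index $i=p$ needs to be inverted) and that $N_v\cap N_v'$ really is contained in $N(v)\cap S$ (which is immediate from the definitions), so that the inclusion–exclusion bound translates directly into the required inequality in terms of $e(v,S)$.
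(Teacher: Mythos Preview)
Your proof is correct and uses the same key observation as the paper: along a reroute ending at $v$, the successor of every neighbor of $v$ lies in $S$. The paper carries out the count by grouping the neighbors of $v$ into maximal index intervals $[a_i,b_i]$ and doing explicit arithmetic, whereas you package the same count as an inclusion--exclusion on $N_v$ and its shifted copy $N_v'$; your formulation is a bit cleaner but the underlying argument is identical.
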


\begin{proof}
Let $P=v_0v_1\ldots v_p$, where $v_0=u$.
Let $0\leq a_1\leq b_1 < a_2\leq b_2 < \ldots <a_m\leq b_m = p-1$ such that $a_{i+1}\geq b_i+2$ for $1\leq i\leq m-1$ and $N(v_p)\cap V(P)=\bigcup_{i=1}^m V(v_{a_i}Pv_{b_i})$. Then it is easy to see that $v_{a_i+1},\ldots, v_{b_i+1}\in S$ for $i=1,\ldots, m$. Thus $e(v, S)\geq \sum_{i=1}^m (b_i-a_i)$. Note that $e(v, V(P))=\sum_{i=1}^m (b_i-a_i+1)$. So
\begin{align*}
e(v, V(P))-e(v, S)/2 &\leq \sum_{i=1}^m (b_i-a_i+1) -\sum_{i=1}^m (b_i-a_i)/2\\
& = b_m/2-\sum_{i=1}^{m-1}(a_{i+1}-b_i-2)/2-a_1/2+1\\
& \leq (p+1)/2.
\end{align*}
\end{proof}

\begin{lemma}\label{lem-longpath2}
Let $u$ be a vertex of a graph $G$, $P$ be a longest $u$-path of $G$ and $Q$ be a $u$-path of length $q$ in $G-V(P-u)$.
Assume $P$ has length $p$ and $Q$ has the end $x$. If $N(x)\cap V(P-u)\neq\emptyset$ then and $e(x, V(P-u))\leq (p+1)/2-q$.
\end{lemma}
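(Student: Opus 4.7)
The plan is to extract a bound on $|N(x)\cap V(P-u)|$ by using the longest-$u$-path assumption on $P$ three times, each time applied to a distinct $u$-path built by concatenating (portions of) $Q$, edges of the form $xv_i$, and (portions of) $P$.

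Write $P=v_0v_1\cdots v_p$ with $v_0=u$, and list the neighbors of $x$ in $V(P-u)$ in increasing order as $v_{i_1},v_{i_2},\ldots,v_{i_t}$ with $i_1<i_2<\cdots<i_t$, so that $t=e(x,V(P-u))$. First I would observe that $uQxv_iPv_p$ is a $u$-path of length $q+1+(p-i)$; since $P$ is a longest $u$-path, this forces $i\geq q+1$. Next, reversing the direction along $P$, the walk $uQxv_iv_{i-1}\cdots v_1$ is a $u$-path of length $q+i$, forcing $i\leq p-q$. Finally, for two neighbors with $i<j$, the walk $uPv_ixv_jPv_p$ is a $u$-path of length $i+2+(p-j)$, forcing $j-i\geq 2$; so consecutive indices in the list $i_1<\cdots<i_t$ differ by at least $2$.

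Combining the first and third bounds yields $i_t\geq i_1+2(t-1)\geq q+2t-1$, while the second gives $i_t\leq p-q$. Hence $q+2t-1\leq p-q$, which rearranges to $t\leq (p+1)/2-q$, as required.

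The whole argument is a direct extremality argument, applied three times; the only point requiring care (really the only ``obstacle'') is to verify that each of the three candidate walks is actually a simple $u$-path rather than a walk repeating a vertex. This follows from $V(Q)\cap V(P)=\{u\}$, which is immediate because $Q\subseteq G-V(P-u)$, together with $x\notin V(P)$ and, in the third case, the strict inequality $i<j$ guaranteeing that the segments $v_0\cdots v_i$ and $v_j\cdots v_p$ are disjoint.
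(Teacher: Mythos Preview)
Your proof is correct and follows essentially the same approach as the paper. Both arguments establish that the first neighbor of $x$ on $P$ has index at least $q+1$, the last has index at most $p-q$, and no two consecutive vertices of $P$ lie in $N(x)$; the paper phrases the third point as ``$x$ is not strictly absorbable to $v_1Pv_2$'' and bounds $e(x,V(v_1Pv_2))\le(b+2)/2$, while you telescope the pairwise gaps directly, but the content is identical.
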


\begin{proof}
Let $v$ be the end of $P$ and $v_1,v_2$ be the first vertex and the last vertex of $P$ in $N(x)$, respectively. Assume $uPv_1$ has length $a$, $v_1Pv_2$ has length $b$ and $v_2Pv$ has length $c$. Then $a+b+c=p$.
If $a\leq q$ or $c\leq q-1$, then either $uQxv_1Pv$ or $uPv_2xQ$ is a $u$-path of length at least $p+1$, a contradiction to the assumption of $P$. So, $a\geq q+1$ and $c\geq q$. Also by the same reason, $x$ can not be strictly absorbed by $v_1Pv_2$. Thus $e(v, V(P-u))=e(v,  V(v_1Pv_2))\leq (b+2)/2=(p-a-c+2)/2\leq (p+1)/2-q$.
\end{proof}

\begin{lemma}\label{lem-xv1v2}
  Let $p$ be a positive integer, $u, w_1, w_2$ be three vertices of a graph $G$ and $P$ be a $u$-path of length $p$ in $G-\{w_1,w_2\}$ such that $e(V(P))$ is maximum.
Let $L=V(P-u)$ and $Q$ be a $u$-path in $G-(L\cup \{w_1,w_2\})$ with end $x$ and with length $q\geq1$.
Denote
$$\lambda=2e(x, L)+e(\{w_1,w_2\},L)- 2p.$$
If $\lambda\geq 0$ then one of the following holds.\\
\indent(a) $\lambda=0$ and $e(\{w_1,w_2\}, L)=0$.\\
\indent(b) $\lambda=0$, $q=1$, $e(w_i, L)=p/2$ and $G[V(P)\cup \{x\}]$ is an $\mathcal H(p/2+1, p/2+1)$-subgraph.\\
\indent(c) There is $i\in \{1,2\}$ and $z\in N(w_i)\cap V(P)$ such that $G[V(P)\cup \{x,w_{3-i}\}\setminus \{z\}]$ has a $u$-path of length
at least $p$.
\end{lemma}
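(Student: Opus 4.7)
The plan is a proof by contradiction: assume $\lambda\geq 0$ and that conclusion (c) fails for both choices $i\in\{1,2\}$, and derive that one of (a) or (b) must hold. Write $P=v_0v_1\cdots v_p$ with $v_0=u$ and $v=v_p$ the other end. The strategy is to enumerate natural rerouting moves that would each produce a $u$-path of length at least $p$ in some $G[V(P)\cup\{x,w_{3-i}\}\setminus\{z\}]$, and record the structural constraints forced when every such move fails.

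\textbf{Step 1 (Rerouting moves).} For each $v_j\in N(w_i)\cap L$ I would form candidate $u$-paths by splicing together pieces chosen from $uPv_{j-1}$, $v_{j+1}Pv_p$, the path $Q$, the edges from $x$ to $L$, and the edges from $w_{3-i}$ to $L$, each time using $v_j$ only as the deleted vertex. For instance, if $x$ is strictly absorbable into the segment $v_{j+1}Pv_p$ and $w_{3-i}$ is adjacent to $u$ or to $v_{j-1}$, then the relevant path of length $\geq p$ exists. Assuming (c) fails, each such concatenation must fail, and the analysis proceeds just as in the proof of Lemma \ref{lem-longpath}: neighbors of $x$ in $L$ can only lie in positions that block none of the allowed splices, and analogously for $N(w_i)\cap L$.

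\textbf{Step 2 (Degree accounting).} Next I would use the maximality of $e(V(P))$: any reroute $P'$ of $P$ with $V(P')=V(P)$ satisfies $e(V(P'))=e(V(P))$, so $P'$ may legitimately replace $P$ and Step~1 can be rerun with $P'$ in place of $P$. This lets me treat the whole set $S$ of reachable endpoints (in the sense of Lemma \ref{lem-longpath}) on an equal footing. Summing the positional restrictions over $S$, and weighting contributions from $x$ by $2$ and from $w_1,w_2$ by $1$ to match the definition of $\lambda$, I expect to obtain
$$2e(x,L)+e(\{w_1,w_2\},L)\ \leq\ 2p,$$
with equality only in very constrained configurations.

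\textbf{Step 3 (Extracting (a) or (b)).} Combined with the hypothesis $\lambda\geq 0$, Step~2 forces $\lambda=0$ and equality throughout. If $e(\{w_1,w_2\},L)=0$, then automatically $e(x,L)=p$, which is case (a). Otherwise some $w_i$ has a neighbor in $L$ and every local inequality is tight; tracing the equality cases back through Step~1 forces $q=1$ (otherwise $Q$ gives extra slack for a splice), $e(w_1,L)=e(w_2,L)=p/2$, $e(x,L)=p/2+1$, and the bipartition of $V(P)\cup\{x\}$ into two classes of size $p/2+1$ with all cross-edges present, which is exactly option (b).

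\textbf{Main obstacle.} The delicate part is Step~1: there are many possible concatenations (using $Q$, strict absorption of $x$ or $w_{3-i}$ into $P$, reroutes of $P$ itself), and I must choose, for each $v_j\in N(w_i)\cap L$, one construction whose failure yields a restriction whose ``weight'' matches the coefficients $(2,1,1)$ in $\lambda$; otherwise summation will not close at exactly $2p$. A secondary difficulty is the case-(b) rigidity: proving that the simultaneous saturation of every local bound forces the complete bipartite $\mathcal{H}(p/2+1,p/2+1)$ structure, rather than merely a partial one, requires carefully exploiting that the equality cases in Lemma~\ref{lem-longpath} themselves describe an alternating pattern along $P$.
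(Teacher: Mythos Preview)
Your overall strategy---assume (c) fails and show $\lambda\le 0$ with equality only in (a) or (b)---is the right framework, but the plan as written has a genuine gap: you have not found the organizing principle that makes the count close.

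The paper's proof pivots on a dichotomy you do not mention: whether $x$ is absorbable to (a suitable reroute of) $P$. If it is, then a separately proved auxiliary result (Lemma~\ref{lem-PQw}, applied once with $w=w_1$ and once with $w=w_2$) does all the work: case (b) of Lemma~\ref{lem-PQw} yields case (b) here directly, and its case (a) forces $N(w_1)\cap L=N(w_2)\cap L=\{v_i: v_{i+1}\notin N(x)\}$, after which a single explicit reroute gives a contradiction (or (a) if this set is empty). If $x$ is not absorbable, then $N(x)\cap L=\{v_{a_1},\dots,v_{a_m}\}$ with $a_{j+1}>a_j+1$, and the count is organized by the \emph{gap blocks} $X_j=\{v_{a_j+1},\dots,v_{a_{j+1}-1}\}$: one proves $e(\{w_1,w_2\},X_j)+\tfrac12 e(\{w_1,w_2\},\{v_{a_j},v_{a_{j+1}}\})\le 2|X_j|$ for each interior block, sums, and a short case analysis on the extreme blocks $X_0,X_m$ (splitting on $a_1\in\{1,2\}$ versus $a_1\ge 3$) finishes with a contradiction in every subcase.

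Your Step~1 iterates over $v_j\in N(w_i)\cap L$ instead; the constraints that produces concern $N(x)$ and $N(w_{3-i})$ near a single $v_j$, and it is not clear how to weight and add them to land on exactly $2p$---the paper's block viewpoint is what makes the $(2,1,1)$ weighting in $\lambda$ natural. Your Step~2 invocation of the reroute-endpoint set $S$ from Lemma~\ref{lem-longpath} is not how the argument actually runs; maximality of $e(V(P))$ enters only inside the auxiliary lemma, where it is used to compare $e(x,V(P))$ with $e(v_p,V(P))$, not to symmetrize over $S$. Finally, a small slip: in (b) one has $e(x,L)=p/2$, not $p/2+1$, since $2e(x,L)+p/2+p/2=2p$.
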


\begin{lemma}\label{lem-xPQ}
Let $p$ be a positive integer, $u$ be a vertex of $G$ and $Q$ be a path in $G-u$ with ends $w_1,w_2$. Assume $q=|V(Q)|$. Let $P$ be a $u$-path of length $p$ in $G-V(Q)$ such that $e(V(P))$ is maximum.
Let $L=V(P-u)$ and $x\in N(u)\setminus V(P\cup Q)$. Denote
$$\lambda=2e(x, L)+e(\{w_1,w_2\}, L)- 2p.$$
If $\lambda\geq0$ and $V(P\cup Q)\subseteq N(u)$ then one of the following holds.\\
\indent(a) $\lambda=0$ and either $e(x, L)<e(w_1, L)=e(w_2,L)$ or $e(x, L)=p$.\\
\indent(b) $\lambda=0$, $e(w_i,L)=p/2$ and $G[V(P)\cup \{x\}]$ is an $\mathcal H(p/2+1, p/2+1)$-subgraph.\\
\indent(c) $G$ contains two disjoint $u$-paths with lengths $p$ and $q+1$, respectively.
\end{lemma}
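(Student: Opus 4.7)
The plan is to suppose conclusion (c) fails and show the density inequality then forces (a) or (b). Write $P = u v_1 v_2 \cdots v_p$ and set $a = e(x, L)$, $b_i = e(w_i, L)$, so the hypothesis reads $2a + b_1 + b_2 \ge 2p$. The key leverage is $V(P \cup Q) \subseteq N(u)$: it allows $u$-paths to start at any vertex of $V(P) \cup V(Q)$, and in particular $u w_1 Q w_2$ is already a $u$-path of length $q$ disjoint from $V(P - u)$. Thus conclusion (c) asks essentially for a one-vertex extension of this path that avoids $V(P-u)$.

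The first easy step is to observe that $x w_1, x w_2 \notin E$. Indeed, if $x w_i \in E$, then $u x w_i Q w_{3-i}$ is a $u$-path of length $q+1$ on the vertex set $\{u, x\} \cup V(Q)$, disjoint from $V(P - u)$, so pairing it with $P$ gives (c). Next comes a \emph{swap principle}: if some $v_j \in L$ is adjacent to $w_i$, then to secure (c) it suffices to produce a $u$-path $P^*$ of length $p$ on the vertex set $V(P) \cup \{x\} \setminus \{v_j\}$, for then $P^*$ and $u v_j w_i Q w_{3-i}$ are the required disjoint pair. So the failure of (c) forces no such $P^*$ to exist for any $v_j \in N(w_i) \cap L$. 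I would exploit this restriction: since every $v_i$ and $x$ are neighbors of $u$, many constructions of $P^*$ are available, e.g.\ $P^* = u v_1 \cdots v_{j-1} x v_{j+1} \cdots v_p$ provided $x v_{j-1}, x v_{j+1} \in E$, or a reroute of $P$ using a pair of $x$-edges into $L$ to splice $x$ in place of $v_j$. The maximality of $e(V(P))$ supplements this by forbidding any $P^*$ whose induced edge count strictly exceeds that of $P$, which converts ``missing'' edges from $x$ into $L$ into ``present'' edges between consecutive vertices of $P$.

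The final and main step is the combinatorial analysis. The inequality $2a + b_1 + b_2 \ge 2p$ says the neighborhoods of $x, w_1, w_2$ in $L$ are dense; the non-existence of the swap paths $P^*$ says these neighborhoods are forced to interleave along $P$ in highly restricted ways. A careful bookkeeping should show that only three patterns are compatible with both: (i) $a = p$ with $b_1 = b_2 = 0$, the $e(x,L)=p$ branch of (a); (ii) $\lambda = 0$ with $a < b_1 = b_2 = p - a$, the other branch of (a); and (iii) $a = b_1 = b_2 = p/2$ with $G[V(P) \cup \{x\}]$ forced into the $\mathcal H(p/2+1, p/2+1)$-structure of (b). I expect this final case analysis to be the main obstacle; it should parallel the argument in Lemma~\ref{lem-xv1v2}, with the role of the two loose vertices $\{w_1, w_2\}$ there replaced here by the endpoints of the $u$-extendable path $Q$, so that $V(Q) \subseteq N(u)$ guarantees the candidate length-$(q+1)$ path always exists once the swap at $P$ is arranged.
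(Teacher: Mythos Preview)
Your setup is correct, and the swap principle you state is indeed one of the tools the paper uses. However, the proposal has a genuine gap at exactly the point you flag as ``the main obstacle'': the swap principle by itself is not strong enough, and the parallel to Lemma~\ref{lem-xv1v2} does not go through as stated.

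The issue is this. Your swap produces the length-$(q{+}1)$ path $u v_j w_i Q w_{3-i}$ using a \emph{single} edge from $\{w_1,w_2\}$ into $L$, and forces $P^*$ to live on the fixed vertex set $V(P)\cup\{x\}\setminus\{v_j\}$. In Lemma~\ref{lem-xv1v2}, by contrast, conclusion (c) allows the new $u$-path to pass through $w_{3-i}$; that extra vertex is exactly what makes the interior counting (Claim~2 there) work. Here $P^*$ must be disjoint from $V(Q)$, so you lose that freedom, and the analogue of Claim~2 does not follow from your swap: when $w_1 v_{b_i+1}\in E$ with $v_{b_i}\in N(x)$ but $v_{b_i+2}\notin N(x)$, there is in general no Hamiltonian $u$-path on $V(P)\cup\{x\}\setminus\{v_{b_i+1}\}$.

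What the paper does instead is exploit $V(P\cup Q)\cup\{x\}\subseteq N(u)$ in a stronger way: it builds a \emph{single} $u$-path of length $p+q+1$ through all of $V(P)\cup V(Q)\cup\{x\}$, using an edge from $w_1$ into $L$ \emph{and} an edge from $w_2$ into $L$ simultaneously (so $Q$ is threaded through the middle), together with two $x$-edges. Because every vertex on this long path is adjacent to $u$, one may cut it after $p$ edges to obtain the two required disjoint $u$-paths of lengths $p$ and $q{+}1$; the resulting pieces need not respect the partition $V(P)\cup\{x\}$ versus $V(Q)$. Concretely, if $v_{b_i},v_{b_j}\in N(x)$ and $w_1v_{b_i+1},\,w_2v_{b_j+1}\in E$, the path $uPv_{b_i}xv_{b_j}Pv_{b_i+1}w_1Qw_2v_{b_j+1}Pv_p$ has length $p+q+1$. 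This ``long-path-then-split'' trick is what replaces the use of $w_{3-i}$ in Lemma~\ref{lem-xv1v2} and is the idea missing from your outline.

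A second point: the paper first disposes of the case where $x$ is absorbable to $P$ by invoking Lemma~\ref{lem-PQw}, which is itself a nontrivial auxiliary result about the structure forced when $e(\{x,w\},L)\ge p$. Your sketch does not address this case; the maximality of $e(V(P))$ alone does not obviously handle it.
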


\begin{lemma}\label{lem-xvw}
 Let $p$ be a positive integer, $u$ be a vertex of a graph $G$, $vw$ be an edge of $G-u$, and $P$ be a $u$-path of length $p$ in $G-\{v,w\}$ such that $e(V(P))$ is maximum.
  Denote
  $$S=\{z\in V(P)\cap N(w)\mid G-\{w,z\}\mbox{ has a $u$-path of length }p\}.$$
Let $L=V(P-u)$. Assume $Q$ is a $u$-path in $G-(L\cup \{v,w\})$ with end $x$ and with length $q\geq1$. Denote
 $$\lambda=2e(x, L)+ e(\{v,w\},L)- 2p - e(v, S)/2.$$
If $\lambda\geq0$ then one of the following holds.\\
\indent (a) $\lambda=0$, $L\subseteq N(x)$ and $N(\{v,w\})\cap L=\emptyset$.\\
\indent (b) $\lambda=0$, $q=1$, $e(w,L)=p/2$ and $G[V(P)\cup \{x,v\}]$ is an $\mathcal H(p/2+1,p/2+1)$-subgraph.\\
\indent (c) there is a $u$-path $P'$ of length $p$ and a path $R$ disjoint from $P'$ with length 2 such that either $R$ is a $w$-path or $w\in V(R)\subseteq V(P)\cup\{w,v\}$ and $V(P')\subseteq V(P)\cup \{x\}$.
\end{lemma}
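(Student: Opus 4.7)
My plan is a case analysis combining the counting hypothesis $\lambda \geq 0$, the Observation bounds, and the structure around the edge $vw$, building on the pattern of Lemma \ref{lem-xv1v2}. The new ingredient is the interplay between the edge $vw$ and the rerouting set $S$, controlled by the correction term $-e(v,S)/2$. I would first dispose of the direct construction: if there exists $z \in V(P) \cap N(v)$ with $z \ne w$ such that $G-\{v,w,z\}$ contains a $u$-path of length $p$, then conclusion (c) is immediate---take $P'$ to be that $u$-path and $R = wvz$ a length-$2$ $w$-path via the edges $wv$ and $vz$. Assume this direct construction fails.

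Next, observe that $2e(x, L) + e(\{v,w\}, L) - 2p \geq \lambda + e(v,S)/2 \geq 0$, so Lemma \ref{lem-xv1v2} applies with $w_1=v$ and $w_2=w$ (no non-adjacency between $w_1,w_2$ is required there). Its conclusion (a) yields our (a) after checking $e(v,S)=0$; its conclusion (b) is upgraded to our (b) by incorporating $v$ into the bipartite structure, using the failure of the direct construction to forbid extra adjacencies of $v$. Its conclusion (c) returns a $u$-path $P^*$ of length $\geq p$ in $V(P) \cup \{x, y\} \setminus \{z\}$ for some $y \in \{v, w\}$ and $z \in V(P) \cap N(y)$; this $P^*$ must be converted into our required $(P', R)$.

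If $y = w$, then necessarily $w \in V(P^*)$ (otherwise $P^*$ would be a $u$-path of length $p$ in $G - \{v, w\}$ that misses $z$, and since $z \in N(v)$ the direct construction would succeed); I then extract a length-$2$ sub-path of $P^*$ through $w$ as $R$ and obtain a disjoint $P' \subseteq V(P) \cup \{x\}$ of length $p$ by rerouting via Lemma \ref{lem-longpath} and the maximality of $e(V(P))$, matching the second subcase of (c). If $y = v$, then using the edge $vw$ to extend $P^*$ (and rerouting around $v$ when $v$ is interior) produces either a $u$-path of length $p+1$ ending at $w$---from which $(P', R)$ is extracted with $R$ a $w$-path---or a configuration contradicting the failure of the direct construction.

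The hardest step is the conversion in the $y = w$ branch when $P^*$ has length exactly $p$ and $w$ is interior to $P^*$: one must simultaneously extract $R$ (with $V(R) \subseteq V(P) \cup \{v,w\}$) and produce a disjoint $P' \subseteq V(P) \cup \{x\}$ of length $p$. The maximality of $e(V(P))$ supplies the necessary edges for rerouting, and the $-e(v,S)/2$ term in $\lambda$ is precisely the slack budget so that each failure of the direct construction (for each $z \in S \cap N(v)$) provides the structural rigidity needed to carry the extraction through.
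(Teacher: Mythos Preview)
Your reduction to Lemma~\ref{lem-xv1v2} is natural, but the conversion of its conclusion~(c) into the present conclusion~(c) does not go through as stated. First, a misreading: in Lemma~\ref{lem-xv1v2}(c) one has $z\in N(w_i)$ while the vertex allowed to appear in $P^*$ is $w_{3-i}$; so with your $y=w_{3-i}$ the correct relation is that $z$ is adjacent to the \emph{other} element of $\{v,w\}$, not to $y$. More importantly, take the branch $w_{3-i}=w$ (hence $z\in N(v)$) with $w$ an interior vertex of $P^*$, say with neighbours $a,b$ on $P^*$. If you set $R=awb$ then $a,b\in V(P)\cup\{x\}$. When $a,b\in V(P)$ the set $(V(P)\cup\{x\})\setminus\{a,b\}$ has only $p$ vertices, one short of what a $u$-path of length~$p$ requires; when $x\in\{a,b\}$, neither subcase of~(c) applies since $w$ is the midpoint of $R$ (not an end) and $V(R)\not\subseteq V(P)\cup\{v,w\}$. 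Neither Lemma~\ref{lem-longpath} nor the maximality of $e(V(P))$ manufactures the missing vertex, and your one-sentence invocation of the $-e(v,S)/2$ ``slack'' does not address this cardinality deficit (note $S\subseteq N(w)$, so $e(v,S)$ gives no direct control in the branch $z\in N(v)$). The symmetric obstruction arises when $w_{3-i}=v$ is interior to $P^*$.

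The paper takes a completely different route. It first applies Lemma~\ref{lem-PQw} (once with $v$ and once with $w$ in the role of that lemma's~$w$) to eliminate the case where $x$ is absorbable to~$P$, and then carries out a long direct analysis: writing $N(x)\cap L=\{v_{a_1},\dots,v_{a_m}\}$, it assigns to each gap $X_i$ between consecutive $x$-neighbours a local score $g(i)$ with $\sum_i g(i)\ge 2\lambda\ge 0$, and bounds each $g(i)$ from above by explicitly constructing a pair $(P',R)$ whenever the local edge count is too large (Claims~2--7, followed by a three-way case split on $|X_m|$). The correction $-e(v,S)/2$ enters because certain local patterns (for instance $v_{a_i},v_{a_i+1}\in N(v)$ together with $X_i\subseteq N(w)$) force specific vertices into~$S$, and this is exactly what is subtracted inside the definition of $f(i)$. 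To salvage your plan you would need a strengthened form of Lemma~\ref{lem-xv1v2}(c) pinning down the position of $w_{3-i}$ on $P^*$, and even that would not by itself close the gap above.
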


In Section 3, we will give a characterization of graphs $G$ and spiders $T$ such that $T$ is not embaddable into $G$ at a specified vertex $u$.
In Section 4, we will use this characterization to prove Theorem \ref{thm-spider}. In Section 5, we will give the proofs of Lemmas \ref{lem-xv1v2},\ref{lem-xPQ} and \ref{lem-xvw}.

\section{Characterization of unembedable spiders}

In this paper, we always assume that $G$ is a graph satisfying \eqref{eq-S} and $T$ is
a spider on $k$ edges.

\begin{notation}[Second end]
Let $u$ be a vertex of a graph $G$ and $P$ be a $u$-path with end $v$. If there is a vertex $w\notin V(P)$ such that $G[V(P-v)\cup \{w\}]$ has a hamiltonian $(u,w)$-path, then $w$ is said to be the second end of $P$ in $G$. If there is no such vertex $w$ exists, then the second end of $P$ in $G$ is referred to a vertex $w'\in V(P)$ such that $P-v$ has a hamiltonian $(u,w')$-path.
\end{notation}

Note that any non-trivial $u$-path has a second end and the second end is a different vertex to the end. Also there may be many second ends for a $u$-path. By the definition, if a second end $w$ of $P$ lies in $P$ then $N(w)\subseteq V(P)$.
The following theorem, which is the main result of the paper, characterize the cases that $T\not\overset u \to G$.

\begin{theorem}\label{thm-spider-T1}
 Let $G$ be a graph satisfying \eqref{eq-S} and $u$ be a vertex with degree at least $k$. Let $T$ be a spider of $k$ edges. If $T\not\overset u \to G$ then one of the following holds.

 \indent(a) $T\in \mathcal T_0\setminus\{T_0\}$ (and thus $k$ is even) and $G\in \mathcal H_0(k/2,n-k/2)$;\\[1mm]
 \indent(b) $T=T_0$ and $G$ has an $\mathcal H(k/2+1,k/2)$-subgraph containing $u$.
\end{theorem}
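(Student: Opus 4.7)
The plan is to prove Theorem \ref{thm-spider-T1} by induction on $k$, the number of edges of $T$. For small $k$, the theorem can be verified directly from the degree condition $d(u)\ge k$. For the inductive step, fix a spider $T$ with $k$ edges and suppose $T\not\overset{u}\to G$; I would form a subspider $T^{-}=T-\ell$ by removing a leaf $\ell$ on a carefully chosen leg, and try to extend an embedding of $T^{-}$ at $u$ back to one of $T$.

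First I would dispose of the regime $T^{-}\not\overset{u}\to G$. The inductive hypothesis applied to $T^{-}$ yields either $G\in\mathcal H_0(a,b)$ or an $\mathcal H(a,b)$-subgraph of $G$ containing $u$, for parameters corresponding to $k-1$ edges. The task is to show that attaching one more leaf back to $T^{-}$ to recover $T$ cannot succeed at $u$ unless the parameters upgrade to those in the conclusion for $T$. The delicate point is a parity bookkeeping: an $\mathcal H$-structure is bipartite-like, so legs must have even length to have their leaves land in the correct side of the bipartition; the extension either succeeds (contradicting $T\not\overset{u}\to G$) or forces $T\in\mathcal T_0$ with matching parameters.

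The main case is $T^{-}\overset{u}\to G$. Fix an embedding $\varphi$ of $T^{-}$ at $u$ that is extremal: among all embeddings first maximize the number of edges of $G$ lying inside $S:=\varphi(V(T^{-}))$, then (as a tie-breaker) choose the image of the shortened leg to be a longest admissible $u$-path. Let $P$ be this image, with end $y=\varphi(\ell')$ where $\ell'$ is the neighbor of $\ell$ in $T$. Completing $\varphi$ to an embedding of $T$ amounts to finding a vertex in $V(G)\setminus S$ adjacent to $y$, possibly after rerouting $P$. Condition \eqref{eq-S} applied to $S$ yields some $v^{\star}\in S$ with $d(v^{\star})-e(v^{\star},S)/2\ge k/2$, and hence many external neighbors. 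Taking an external neighbor $x$ of $v^{\star}$ and combining it with the partial embedding lets me invoke one of Lemmas \ref{lem-xv1v2}, \ref{lem-xPQ}, \ref{lem-xvw}; the choice is dictated by whether the obstruction manifests as a pair of external vertices, an external $u$-path, or an external edge incident to $S$. Conclusion (c) of the invoked lemma provides the reroute or extension needed to embed $T$ at $u$, contradicting the hypothesis.

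The surviving possibility is that the invoked lemma returns its conclusion (a) or (b), giving a local $\mathcal H$-subgraph around $P$ and a local parity constraint on the length $p$. The main obstacle is aggregating these \emph{local} conclusions at every leg of $T$ into the global statement: I would apply \eqref{eq-S} repeatedly to subsets of $V(G)$ outside the discovered $\mathcal H$-substructure to rule out extra edges or vertices, so that either the $\mathcal H$-structure fills all of $G$ (yielding case (a) with $T\in\mathcal T_0\setminus\{T_0\}$) or the spider collapses to $T_0$ (yielding case (b) with an $\mathcal H(k/2+1,k/2)$-subgraph containing $u$). The delicate bookkeeping — matching the side of the bipartition containing $u$ to the parities of the legs of $T$, and verifying the integer constraints on $|X|$ and $|Y|$ forced by the leg counts — is what I expect to consume the bulk of the argument.
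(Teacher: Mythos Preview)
Your high-level plan (induction on $k$, delete a leaf, extend an extremal embedding of $T^-$ via the three lemmas) matches the paper, but the way you set up the lemma applications has a real gap.

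The three extending lemmas are not applied to the shortened leg. In the paper, one fixes an embedding $T_1$ of $T'$ with legs $P_1,\dots,P_d$ (where $P_1$ is the shortened one), takes a longest $u$-path $Q$ in $G-V(T_1-u)$ with end $x$, and singles out \emph{two} vertices attached to $P_1$: its end $v$, and its ``second end'' $w$ in $G-(L\cup\{x\})$ (a vertex outside $P_1$ that can replace $v$ as an end, or, failing that, the end of a reroute of $P_1-v$). The lemmas are then applied to each of the \emph{other} legs $P_i$, $i\ge 2$, with the pair $\{v,w\}$ (or the edge $vw$, or the path $P_1$ through $v,w$) and the vertex $x$ playing the auxiliary roles in the lemma hypotheses. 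Conclusion (c) of the invoked lemma produces a reroute of $P_i$ that liberates a vertex adjacent to $v$ or $w$; appending that vertex to $P_1$ embeds $T$. Your proposal neither introduces the second end $w$ nor recognizes that the lemmas act on the full-length legs $P_i$ rather than on $P_1$; without the pair $(v,w)$ you cannot even state, let alone verify, the $\lambda\ge 0$ hypothesis of any of Lemmas \ref{lem-xv1v2}, \ref{lem-xPQ}, \ref{lem-xvw}.

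Relatedly, applying \eqref{eq-S} to all of $S=\varphi(V(T^-))$ is too coarse. The paper applies \eqref{eq-S} to the set $S_0$ of \emph{possible ends} of reroutes of $P_1$ (and to analogous sets for $w$ and for $x$ along $Q$), then combines this with Lemma \ref{lem-longpath} to force $e(v,L),\,e(w,L),\,e(x,L)\ge (\ell-1)/2$ where $L=\bigcup_{i\ge 2}V(P_i-u)$; summing these three bounds is exactly what yields $\lambda\ge 0$ on some leg $P_i$. A generic $v^\star\in S$ with large $d(v^\star)-e(v^\star,S)/2$ may sit in the interior of a leg and give no leverage toward extending $P_1$. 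Your extremal choice (maximize internal edges, then longest path) is also weaker than the paper's three-tier optimization \eqref{eq-choice-T1}, which is tuned to control whether $w\in L_1$ and to make the relevant degree inequalities survive after a swap. As written, the proposal does not assemble the specific inputs that Lemmas \ref{lem-xv1v2}, \ref{lem-xPQ}, \ref{lem-xvw} require.
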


\begin{proof}
The proof is by induction on $k$. When $k=1$ then $T\overset u\to G$ clearly. So we assume $k\geq2$ and the result holds for any spiders with $k-1$ edges.
Assume that $T$ has $d$ legs whose lengths are $\ell_1,\ldots,\ell_d$, respectively. Then $\sum_{i=1}^d \ell_i=k$.
Let $T'$ be the spider obtained from $T$ by removing a leaf from the leg with length $\ell_1$.
Then neither (a) nor (b) holds for $T'$, since $T'$ has $k-1$ edges. Thus by induction hypothesis, $T'\overset u\to G$.
We shall pick a special embedding of $T'$ in $G$. To this end, we define some notations for an embedding $T_1$ of $T'$ in $G$.

For an embedding $T_1$ of $T'$, let $P_1,P_2,\ldots, P_d$ be the legs with length $\ell_1-1,
\ell_2,\ldots, \ell_d$, respectively.
For $i=1,\ldots,d$, let $L_i=V(P_i-u)$ and $L=\bigcup_{i=2}^{d}L_i$.
Let $Q$ be a longest $u$-path in $G-V(T_1-u)$ and denote by $x$ the end of $Q$.
Let $w$ be the second end of $P_1$ in $G-(L\cup \{x\})$.
Denote $\ell=|L|$ and
$$S_0=\{v\mid \mbox{there exists an embedding $T_1\overset u\to G$  such that $v$ is the end of $P_1$}\}.$$
Now, we may choose such $T_1, Q, x, w$ so that

$\begin{array}{ll}
(1)\ \delta_{w\in L_1}\cdot(d(v)-e(v, S_0)/2-n) \mbox{ is as large as possible},\\
(2) \mbox{ subject to (1), } \min\{e(w, L), \frac{\ell-1}{2}\}+\min\{e(x, L), \frac\ell2\}\mbox{ is as large as possible},\\
(3) \mbox{ subject to (1)(2), } \sum_{i=2}^{d} e(V(P_i)) \mbox{ is as large as possible},
\end{array}$\hfill(\refstepcounter{equation}\theequation\label{eq-choice-T1})\\[1mm]
where $\delta_{w\in L_1}=1$ if $w\in L_1$ and $\delta_{w\in L_1}=0$ if otherwise.
Then we have the following claims.

\textbf{Claim 1.} $e(v, L)  \geq \frac{\ell+e(v, L\cap S_0)\cdot \delta_{w\in L_1}}2$ and $e(w, L)\geq\frac{\ell-\delta_{w\in L_1}}2\geq \frac{\ell-1}{2}$.

If $w\notin L_1$ then by the assumption that $T\not\overset u\to G$, $w$ is not absorbable by $P_1$ and thus $e(w, V(P_1))\leq \ell_1/2$. It follows that $e(w, L)\geq  d(w)-{\ell_1}/{2}={\ell}/{2}$. Note that $v$ and $w$ are symmetric in this case. So, we also have $e(v,L)\geq d(v)-{\ell_1}/{2}\geq \ell/2$. The result holds.

If $w\in L_1$ then by \eqref{eq-S}, $e(S_0)+d(S_0)>\frac{k-1}2|S_0|$. Then by the assumption of $v$ in \eqref{eq-choice-T1}, $d(v)-e(v, S_0)/2\geq k/2$. By the assumption that $T\not\overset u\to G$, $P_1$ is the longest $u$-path in $G-L$. Then $N(v)\subseteq V(T_1)$.  Let $S_1^v=\{z\in L_1\mid $ there is a reroute of $P_1$ with end $z\}$. Then $S_1^v\subseteq S_0\cap L_1$.
By Lemma \ref{lem-longpath}, $e(v, V(P_1))-e(v, S_0\cap L_1)\leq \ell_1/2$. Thus $e(v, L)-e(v, S_0\cap L)/2\geq \ell/2$.
Similarly, by the definition of second ends, $N(w)\subseteq V(T_1)$. Let $S_1^w=\{z\in L_1\mid $ there is a reroute of $P_1-v$ with end $z\}$. By \eqref{eq-S} and by the assumption of $w$ in \eqref{eq-choice-T1}, $d(w)-e(w, S_1^w)/2\geq k/2$ and by  Lemma \ref{lem-longpath}, $e(w,V(P_1-v))-e(w, S_1^w)/2\leq (\ell_1-1)/2$. It follows that $e(w,L)\geq k/2-(\ell_1-1)/2-1\geq (\ell-1)/2$. The claim is proved.

Assume $Q$ has length $q$. Then $q\geq1$ since $e(T_1)=k-1$.
We have the following claim.

\textbf{Claim 2.} $e(x, L)\geq \ell/2$. 

By the assumption, $P_1$ is the longest $u$-path of $G-L$. By Lemma \ref{lem-longpath2}, $e(x, V(P_1-u))\leq \max\{\ell_1/2-q, 0\}$. Let $S_x=\{z\in V(Q)\mid $ there is a reroute of $Q$ with end $z\}$. By \eqref{eq-S}, Lemma \ref{lem-longpath} and by the choice of $x$, $d(x)-e(x,S_x)/2\geq k/2$ and $e(x,V(Q))-e(x,S_x)/2\leq (q+1)/2$. It follows that $e(x, L)\geq k/2-(q+1)/2-\max\{\ell_1/2-q,0\}=\ell/2+\min\{(q-1)/2, (\ell_1-1-q)/2\}\geq \ell/2$. Claim 2 is proved.

By Claims 1 and 2, for any embedding of $T'$, we may assume that \eqref{eq-choice-T1}(2) holds as long as \eqref{eq-choice-T1}(1) holds. In order prove (a) or (b), we need the following notation.
Denote
$$\mathcal I_0=\{i\geq 2\mid e(v, L_i)=\ell_i/2, G[V(P_i)\cup \{x\}]\in \mathcal H(\ell_i/2+1,\ell_i/2+1)\}.$$
We will characterize $\mathcal I_0$.

\textbf{Claim 3.} If $w\in L_1$ then $q=1$ and $V(T_1-u)\cup \{x\}=N(u)$.

Suppose, to the contrary, that either $q\geq 2$ or $V(T_1-u)\cup \{x\}\neq N(u)$. Then $G-(V(T_1-u)\cup \{x\})$ has a non-trivial $u$-path. Then by \eqref{eq-choice-T1}(1), $x\notin N(w)$. We will find an embedding of $T$ in this case.

Note that both $e(w, L)$ and $e(x, L)$ are integers. By Claims 1 and 2,
$2e(x, L)+e(\{v,w\}, L)\geq 2\ell+e(v, S_0\cap L)/2.$
Denote
$$\mathcal I=\{i\geq2\mid 2e(x, L_i)+e(\{v,w\}, L_i)\geq 2\ell_i+e(v, S_0\cap L_i)/2\}.$$
Then $\mathcal I\neq\emptyset$.
Pick $i\in \mathcal I$ and let $G_i$ be the graph obtained from $G[V(P_i)\cup V(Q)\cup \{v,w\}]$ by adding an edge between $v$ and $w$ if $vw\notin E(G)$. If $N(\{v,w\})\cap L_i\neq\emptyset$ and [$q>1$ or $i\notin \mathcal I_0$] then by Lemma \ref{lem-xvw}, $G_i$ has two disjoint paths $P_i'$ and $R$ such that $P_i'$ is a $u$-path of length at least $\ell_i$ and $R$ is a path of length 2 such that $R$ is a $w$-path or $w\in V(R)\subseteq V(P_i)\cup \{v,x\}$ and $V(P_i')\subseteq V(P_i)\cup \{x\}$.

In this case, it is easy to see that $w\in V(R)$.
If $R$ is a $w$-path then $G[V(P_i)\cup V(Q)\cup L_1\setminus V(P_i')]$ contains a $u$-path (either $uP_1wR$ or $uP_1vR$) with length $\ell_1$. This implies an embedding of $T$, a contradiction.
 If $w$ is the unique inner vertex of $R$, letting $V(R)=\{z_1,w,z_2\}$, then let $T_1'$ be the embedding of $T'$ obtained from $T_1$ by replacing $P_i$ with $P_i'$ and replacing $P_1$ with $P_1':=uP_1wz_1$. Recall that $G-(V(T_1'-u)\cup \{z_2\})=G-(V(T_1-u)\cup\{x\})$ has a non-trivial longest $u$-path $Q'$. By the assumption that $T\not\overset u\to G$, $z_2\notin V(Q')$. It follows that $z_2$ is the second end of $P_1'$ such that $z_2\notin V(P_1')$, a contradiction to \eqref{eq-choice-T1}(1).

Hence, for any $i\in \mathcal I\setminus \mathcal I_0$, we have $2e(x, L_i)+e(\{v,w\}, L_i)=2\ell_i$, $q=1$ and $N(\{v,w\})\cap L_i=\emptyset$.
Noting that $e(v, L)=e(x,L)=e(w,L)=\ell_i/2$ for any $i\in \mathcal I\cap \mathcal I_0$, we see that $2e(x, L_i)+e(\{v,w\}, L_i)=2\ell_i$ for $i\in \mathcal I$.
This, together with the fact $2e(x, L)+e(\{v,w\}, L)\geq 2\ell+e(v, S_0\cap L)/2$ and Claim 1, forces $\mathcal I=\{2,\ldots, d\}$ and $\mathcal I_0=\mathcal I$.
Then $q=1$, $e(x, L)=\ell/2$ and $e(x, V(P_1))\geq d(x)-\ell/2\geq \ell_1/2$. This, together with $v,w\notin N(x)$, implies $x$ is absorbable to $P_1$ and $T\overset u\to G$, a contradiction.
The claim is proved.

\textbf{Claim 4.} $e(x, L)=e(v,L)=\ell/2$, $q=1$ and $\mathcal I_0=\{2,\ldots, m\}$.

If $w\in L_1$ then by Claim 3, $q=1$ and $V(T_1-u)\cup \{x\}=N(u)$. By \eqref{eq-choice-T1}(1), for any embedding of $T'$ at $u$, $\delta_{w\in L_1}=1$ and thus \eqref{eq-choice-T1}(2) holds. 
Now we pick another vertex in $L_1$. Let $S_1'=\{v_1'\mid G[L_1]$ has a hamiltonian $v$-path with end $v_1'\}$. By \eqref{eq-S}, $e(S_1')+d(S_1')>\frac{k-1}{2}|S_1'|$. It follows that there exists $v'\in S_1'$ such that $d(v')- e(v', S_1')/2\geq k/2$. By Lemma \ref{lem-longpath}, $e(v', L_1)-e(v', S_1')/2\leq (\ell_1-1)/2$. Also, since $v\in N(u)$ and $T\not\overset u\to G$, $N(v')\subseteq V(T_1)$. It follows that $e(v', L)\geq k/2-(\ell_1-1)/2-1=(\ell-1)/2$. By Claims 1 and 2, we see that $2e(x, L)+e(\{v,v'\}, L)\geq 2\ell$. Denote
$$\mathcal I_1=\{i\geq 2\mid  2e(x, L_i)+e(\{v,v'\}, L_i)\geq 2\ell_i\}.$$
Recalling $T\not\overset u\to G$, $G[V(P_i)\cap L_1\cup \{x\}]$ can not contain two disjoint $u$-paths with length $\ell_i$ and $\ell_1$. So by Lemma \ref{lem-xPQ}, for any $i\in \mathcal I_1$, $2e(x, L_i)+e(\{v,v'\}, L_i)=2\ell_i$, and for any $i\in \mathcal I_1\setminus \mathcal I_0$, either $2e(x,L_i)< e(v, L_i)=e(v', L_i)$ or $e(x, L_i)=\ell_i$.

 By the arbitrariness of $i$ and by the fact $2e(x, L)+e(\{v,v'\}, L)\geq 2\ell$, we see that $\mathcal I_1=\{2,\ldots, d\}$. Thus $e(x, L)=e(v, L)=\ell/2$ by Claims 1 and 2. Moreover, noting that $e(x,L_i)=e(v,L_i)=\ell_i/2$ for any $i\in  \mathcal I_0$, if there exists $i\in \mathcal I_1\setminus \mathcal I_0$ such that $e(x, L_i)<e(v, L_i)$ then there exists another $j\in \mathcal I_1\setminus \mathcal I_0$ such that $e(x, L_j)> e(v, L_j)$ and thus $e(x, L_j)=\ell_i$. So we may always assume that $j\in \mathcal I_1\setminus \mathcal I_0$ such that $e(x, L_j)=\ell_j$ and thus $V(P_j)\subseteq N(x)$. By \eqref{eq-choice-T1}(3), $G[V(P_j)]$ is complete. However, by \eqref{eq-S}, $e(L_j\cup\{x\})+d(L_j\cup \{x\})>\frac{k-1}{2}(\ell_j+1)$. It follows that there exists $x'\in L_j\cup \{x\}$ such that $d(x')\geq k/2+\ell_j/2> k/2$. Noting that $G[V(P_j)\cup \{x\}]$ is complete, there is another embedding $T_1'$ of $T'$ such that $V(T_1')=V(T_1)\cup \{x\}\setminus \{x'\}$ and \eqref{eq-choice-T1}(3) holds for $T_1'$. Then we may replace $x$ with $x'$ in all the above and obtain that $e(x', L\cup \{x\}\setminus \{x'\})=\ell/2$ and $e(x',L_1)\geq d(x')-\ell/2>\ell_1/2$. It follows that $x'$ is absorbable to $P_1$ and $T\overset u\to G$, a contradiction. Hence, $\mathcal I_0=\mathcal I_1=\{2,\ldots, d\}$. The results hold.

If $w\notin L_1$ then by Claims 1 and 2,
$2e(x,L)+e(\{v,w\},L)\geq 2\ell.$
Let $$\mathcal I_2=\{i\mid 2\leq i\leq d, 2e(x,L_i)+e(\{v,w\},L_i)\geq 2\ell_i\}.$$
Then $\mathcal I_2\neq\emptyset$. If there exists some $i\in \mathcal I_2$ and a vertex $z\in N(v)\cap V(P_i)$ (or $z\in N(w)\cap V(P_i)$) such that $G[V(P_i-z)\cup \{x,w\}]$ (or $G[V(P_i-z)\cup \{x,v\}]$) contains a $u$-path $P_i'$ with length $\ell_i$, then by replacing $P_i, P_1$ with $P_i', uP_1vz$ (or $uP_1wz$), respectively, $T\overset u\to G$, a contradiction. So by Lemma \ref{lem-xv1v2}, $q=1$ and for each $i\in \mathcal I_2$, $2e(x, L_i)+e(\{v,w\}, L_i)=2\ell_i$ and for each $i\in \mathcal I_2\setminus \mathcal I_0$, $e(\{v,w\}, L_i)=0$.

By the arbitrariness of $i$ and by the fact that $2e(x,L)+e(\{v,w\}, L)\geq 2\ell_i$, we see that $\mathcal I_2=\{2,\ldots, d\}$. Thus $e(x, L)=e(v, L)=\ell/2$ by Claims 1 and 2. Moreover, if there exists $\mathcal I_0\neq \mathcal I_2$ then $e(v, L)=\sum_{j\in \mathcal I_2\setminus \mathcal I_0} e(v, L_j) +\sum_{j\in \mathcal I_0}e(v, L_j)<\ell/2$, a contradiction. Hence, $\mathcal I_0=\mathcal I_2=\{2,\ldots, d\}$. The claim is proved.

Denote
$X_0=\{x'\mid$ there exists an embedding $T_1'$ of $T'$ and a longest $u$-path $Q'$ with end $x'$ in $G-V(T_1'-u)$ such \eqref{eq-choice-T1} holds.$\}.$ Then $x\in X_0$. In the following we will give a detail characterization of the neighbors of vertices in $X_0$.

For $i=1,2,\ldots, d$, let $P_i=uv_1^i\ldots v_{\ell_i}^i$. When $i\geq 2$,
let $X_i=\{v_1^i,v_3^i,\ldots, v_{\ell_i-1}^i\}$. Then by Claim 4, it is easy to see that $X_i\subseteq X_0$ and $N(x)\cap V(P_i)=V(P_i)\setminus X_i$. Next we consider $P_1$. Again by Claim 4, $e(x, L_1)=d(x)-e(x,L)\geq  k/2- \ell/2={\ell_1}/2$. Together with the fact $xv\notin E(G)$ and $x$ is not absorbable to $P$, we see that $\ell_1$ is even and $N(x)\cap V(P_1)=\{u,v_2^1,\ldots, v_{\ell_1-2}^1\}$. By replacing $v_{i}^1$ with $x$ we obtain a new embedding $T_1'$ of $T'$ such that \eqref{eq-choice-T1} holds. Thus $X_1:=\{v_1^1,v_3^1,\ldots, v_{\ell_1-1}^1\}\subseteq X_0$. Let $X=(\bigcup_{i=1}^d X_i)\cup (N(u)\setminus V(T_1))$ and $Y=V(T_1)\setminus X$. Then $|Y|= k/2$ and $N(x)=Y$ by Claim 4. It follows that $|X|=d(u)+1-|Y|=d(u)- k/2+1$. Also, by the arbitrariness of $x$ and the fact $X\subseteq X_0$, we see that $N(x')=Y$ for any $x'\in X$. Thus $G[X\cup Y]$ is an $\mathcal H( d(u)- k/2+1, k/2)$-subgraph of $G$.

If $T=T_0$ then (b) holds. So we may assume $T\neq T_0$ and then $T\in \mathcal T_0\setminus \{T_0\}$. It suffices to show $V(G)=X\cup Y$.
If not, then by the connectedness of $G$ there exist $y_1\in Y$ and $z\notin X\cup Y$ such that $y_1z\in E(G)$. Then $y_1\neq u$. Without loss of generality, we may assume $P$ is the longest leg of $T$. Then $P$ has length at least 4. Assume $t_3,t_2,t_1$ are the last three vertex on the $u$-path $P$, where $t_1$ is the end.
Then it is easy to find an embedding of $T-\{t_1,t_2\}$ such that $t_3$ is placed at $y_1$. Also, by \eqref{eq-S}, $d(z)\geq k/2$. As $N(x)=Y$ for any $x\in X$ and $zu\notin E(G)$, $z$ has a neighbor, say $z'$, in $V(G)\setminus (X\cup Y)$. Thus $t_2, t_1$ can be placed at $z, z'$, respectively, and an embedding of $T$ at $u$ in $G$ is obtained, a contradiction. So $V(G)=X\cup Y$, implies that $G\in \mathcal H(n-k/2, k/2)$ and (a) holds.
\end{proof}

Theorem \ref{thm-spider} follows from Theorem \ref{eq-choice-T1}, because if $T\not\overset u\to G$ then $T\in \mathcal T_0$ and $G$ contains an $\mathcal H(k/2+1,k/2)$-subgraph, and $T$ can be embedded into this $\mathcal H(k/2+1,k/2)$-subgraph easily.

\section{Proof of Extending Lemma}

In this section, we prove the three extending lemmas stated in Preliminaries. 

\begin{lemma}\label{lem-PQw}
Let $p$ be a positive integer, $u, w$ be two vertices of a graph $G$ and $P$ be a $u$-path of length $p$ in $G-w$ such that $e(V(P))$ is maximum.
Let $L=V(P-u)$ and $Q$ be a $u$-path in $G-(L\cup \{w\})$ with length $q\geq1$ and with end $x$.
Suppose $P'=uv_1v_2\ldots v_p$ is the reroute of $P$ such that $\max\{i\mid v_i\in N(x)\}$ is maximized. Denote
$$\lambda=e(\{x,w\}, L)- p.$$
If $\lambda\geq 0$ and $x$ is absorbable to $P'$ then one of the following holds.\\
\indent(a) $\lambda=0$, $v_p\in N(x)$, $N(w)=\{v_i\mid v_{i+1}\notin N(x)\}$, and either $e(w, L)=0$ or $w$ is strictly absorbable to $P'$.\\
\indent(b) $\lambda=0$, $q=1$, $e(w, L)=p/2$, and $G[V(P)\cup \{x\}]$ is an $\mathcal H(p/2+1, p/2+1)$-subgraph.\\
\indent(c) There is a $u$-path $P''$ of length $p$ and a $w$-path $R$ disjoint from $P''$ such that either $|V(R)|=3$ and $V(P'')\subseteq V(P)\cup V(Q)$ or $|V(R)|=2$ and $V(P'')\subseteq V(P)\cup \{x\}$.
\end{lemma}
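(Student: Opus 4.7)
The plan is to either construct the pair $(P'',R)$ required by (c), or else use the failure of all such constructions to pin down the structure in (a) or (b). The starting observation is that since $x$ is absorbable to $P'$, the insertion $P^* := P'\oplus x$ is a $u$-path of length $p+1$ whose vertex set is exactly $V(P)\cup\{x\}$; its endpoint, which I denote $y^*$, equals $x$ in the end-absorbed case and equals $v_p$ in the strictly-absorbed case.

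The quick win is the observation that if $wy^*\in E(G)$, then $P'':=P^*-y^*$ and $R:=wy^*$ give (c) with $|V(R)|=2$, so I may assume $wy^*\notin E(G)$. Next, for each $i\in B:=\{i\ge 1:v_i\in N(w)\}$ I would look for a $u$-path of length $p$ inside $V(P)\cup\{x\}$ that avoids $v_i$; if found, pairing it with $R:=wv_i$ again yields (c). A third, ``long'' construction uses $Q$: when $v_p\in N(x)$, the path $uQ x v_p v_{p-1}\ldots v_{q+1}$ is a $u$-path of length $p$ with vertex set in $V(P)\cup V(Q)$, and for any $i\in B$ with $2\le i\le q$ the $w$-path $R:=wv_iv_{i-1}$ is disjoint from it, giving (c) with $|V(R)|=3$.

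Suppose none of these constructions succeed. The failure of the second family, for each $i\in B$, must by the rotation--extension technology underlying Lemma \ref{lem-longpath} translate into a very specific list of non-edges; the maximal choice of $j=\max\{i:v_i\in N(x)\}$ in the definition of $P'$ is what keeps this translation rigid. Together with the failure of the third family these constraints should force $v_p\in N(x)$ and the shift identity $v_i\in N(w)\iff v_{i+1}\notin N(x)$, whence counting closes $\lambda=e(\{x,w\},L)-p$ to $0$. This is precisely (a), except for the dichotomy ``$e(w,L)=0$ or $w$ strictly absorbable'', which drops out immediately: if some $v_i,v_{i+1}\in N(w)$ then $w$ is strictly absorbable by definition, and otherwise the alternating structure of $N(w),N(x)$ along $P'$ together with one more step (using $q\ge 1$ and rerouting $Q$ as in Lemma \ref{lem-longpath}) forces either $B=\emptyset$, or $q=1$ with the balanced bipartite-complete structure of (b).

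The main obstacle will be the second family of constructions: for each $i\in B$ I need to check \emph{every} Hamiltonian $u$-path on $V(P)\cup\{x\}\setminus\{v_i\}$ reachable from reroutes of $P'$, and ensure that failure of all of them yields a single clean non-adjacency, which then glues into the shift identity of (a). Because $P'$ was itself chosen to maximise $j$, the pool of admissible reroutes is restricted, and this restriction has to be invoked at exactly the right moment so that the constraints pinning down $N(w)$ remain consistent with the maximality of $j$. The $|V(R)|=3$ subcase of (c) is easy once $q\ge 2$, so essentially all the rigidity analysis is concentrated in the $q=1$ regime, which is precisely where (b) lives.
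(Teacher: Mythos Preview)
Your overall plan---build three families of $(P'',R)$ pairs and argue that their simultaneous failure pins down (a) or (b)---is exactly the paper's route. The first and third families you describe are the paper's constructions almost verbatim, and the ``shift identity'' $v_i\in N(w)\iff v_{i+1}\notin N(x)$ is precisely what the paper extracts from the set $S_2=\{v_i:v_{i-1}\in N(x)\}\cup\{v_1,\dots,v_q\}$ together with the counting $|S_2|\ge e(x,L)$ and $S_2\cap N(w)=\emptyset$.

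There is, however, a genuine gap: you never invoke the hypothesis that $e(V(P))$ is maximum, and the paper uses it at two essential points that your sketch does not cover. First, the claim $v_p\in N(x)$ (your ``should force'') is not a consequence of the reroute maximality of $P'$ alone. When $v_p\notin N(x)$ the paper locates $j$ with $v_j,v_{j+1}\in N(x)$, notes that $uP'v_jxv_{j+1}P'v_{p-1}$ is a competing $u$-path of length $p$ on $(V(P)\setminus\{v_p\})\cup\{x\}$, and uses $e(V(P))$-maximality to deduce $e(x,V(P))\le e(v_p,V(P))$; only then does the counting via $S_1=\{v_i:v_{i-1}\in N(v_p)\}$ close. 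Rotation--extension in the sense of Lemma~\ref{lem-longpath} gives you reroutes of $P'$, not this degree comparison, so your sketch has no mechanism to rule out $v_p\notin N(x)$.

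Second, the passage from (a) to (b) is not the one-line step you describe. Once $e(w,L)>0$ and $w$ is not strictly absorbable, the paper takes $j=\max\{i:v_i\in N(w)\}$, uses two explicit length-$(p-1)$ paths through $x$ to bound $e(v_{j-1},V(P))$, and then applies $e(V(P))$-maximality a second time to get $e(x,L)\le e(v_{j-1},V(P))-e(x,u)$; combining this with $e(w,L)\le (j-q+1)/2$ forces $q=1$, $e(x,L)=e(w,L)=p/2$, and finally the $\mathcal H(p/2+1,p/2+1)$ structure (again using $e(V(P))$-max to lower-bound $e(v_i,V(P))$ for each odd $i$). None of this comes from rerouting $Q$ via Lemma~\ref{lem-longpath}. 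So your plan is the right one, but it needs the $e(V(P))$-maximality hypothesis inserted at both of these junctures to go through.
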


\begin{proof}
Write $v_0=u$. Suppose that (a) and (c) do not hold and we will prove (b).

\textbf{Claim 1.} $v_p\in N(x)$.

Suppose that $v_p\notin N(x)$. Then by the assumption of $P'$ there exists $j$ such that $v_j,v_{j+1}\in N(x)$ and then $v_{j-1}, v_j\notin N(v_p)$.
By the assumption that $e(V(P))$ is maximized, $e(x, V(P))\leq e(v_p, V(P)\cup \{x\})=e(v_p, V(P))$. Let $S_1=\{v_i\mid v_{i-1}\in N(v_p)\}$. Then $|S_1|=e(v_p, V(P))\geq e(x, L)+e(x,u)$ and $S_1\cap N(x)=\emptyset$. Also, we have $S_1\cap N(w)=\emptyset$, for otherwise, letting $v_i\in N(w)\cap S$, $uP'v_{i-1}v_pP'v_{i+1}\oplus x$ is a $u$-path satisfying (c). Thus $N(\{x, w\})\cap S_1=\emptyset$.
It follows that $e(x, L)+e(x,u)+e(z, L)\leq |S_1|+e(z, L)\leq p$ for $z\in \{x, w\}$. Together this with the assumption $\lambda\geq0$, forces $e(x,u)=0$ and $N(x)\cap L=N(w)\cap L=L\setminus S_1$.
If $v_{j-1}\in N(x)$ then $uPv_{j-1}xv_{j+1}Pv_p$ is a $u$-path satisfying (c), a contradiction. So $v_{j-1}\in S_1$ and $v_{j-2}v_p\in E(G)$. Then $uP'v_{j-2}v_pP'v_{j+1}xQ$ and $wv_jv_{j-1}$ are paths satisfying (c), a contradiction. Claim 1 is proved.

Let $S_2=\{v_i\mid v_{i-1}\in N(x)\}\cup \{v_1,\ldots,v_q\}$. Then $|S_2|\geq e(x, L)$ and
$S_2\cap N(w)=\emptyset$. By the assumption that $\lambda\geq0$, we see that $\lambda=0$, $|S_2|=e(x, L)$ and
\begin{equation}\label{eq-S2Nw}
N(w)\cap L=L\setminus S_2.
\end{equation}
This implies the first part of (a) and $v_1,\ldots, v_{q-1}\in N(x)$. So we may assume that $e(w, L)>0$ and $w$ is not strictly absorbable to $P'$.

Let $j=\max\{i \mid v_i\in N(w)\}$. Then by \eqref{eq-S2Nw} and by the assumption that (a) is not true, $v_{j-1}\notin N(x)$ and $v_{j-2}, v_j,\ldots, v_p\in N(x)$. Then $v_{j-1}$ is not absorbable to both $P_1=uP'v_{j-2}xv_pP'v_{j+1}$ and $P_2=uP'v_{j-2}xv_{j+1}P'v_p$
since $P_1,P_2$ are $u$-paths of length $p-1$. Thus
$e(v_{j-1}, V(P))\leq e(v_{j-1}, V(uP'v_{j-2}))+e(v_{j-1}, V(v_{j+2}P'v_{p-1}))+1\leq j/2 + \max\{(p-j-1)/2,0\} +1=\max\{(p-1)/2, j/2\}+1$. By the assumption of $P$, $e(x, L)\leq e(v_{j-1}, V(P))-e(x,u)\leq  \max\{(p-1)/2, j/2\}+1-e(x,u)$. Also, by the assumption of $j$, $e(w, L)\leq  (j-q+1)/2$. Thus $p\leq e(\{x,w\}, L)\leq  \max\{(p-1)/2, j/2\}+1-e(x,u) + (j-q+1)/2$. It follows that $j\geq p-1+e(x,u)+(q-1)/2$. If the inequality is strict then $e(x,u)=0$, $q=2$ and $j=p-1$. By \eqref{eq-S2Nw}, $v_1\in N(x)$ and thus $uQxv_1P'v_{p-2}$ and $wv_{p-1}v_p$ are  desired paths, a contradiction. So we see that $j= p-1+e(x,u)+(q-1)/2$ and all the inequalities become equalities. Thus $j=p$ and $e(w, L)=(p-q+1)/2$, $e(x, L)=p/2+1-e(x,u)$. It follows that $e(x, L)=e(w,L)=p/2$, $e(x,u)=1$ and $q=1$, since $\lambda=0$. Again, noting that $w$ is not absorbable to $P'$, $N(w)\cap L=\{v_2, v_4,\ldots, v_p\}$ and thus $N(x)\cap L=\{v_2,v_4,\ldots, v_p\}$. For $i\in \{1,3,\ldots, p-1\}$, noting that $uP'v_{i-1}xv_{i+1}P'v_p$ is a $u$-path of length $p$, $e(v_i, V(P))\geq e(x, V(P))=p/2+1$. By the assumption that (c) does not hold, $v_i$ is not absorbable to $uP'v_{i-1}xv_pP'v_{i+2}$. This implies $N(v_i)\cap L=\{v_2,v_4,\ldots, v_p\}$. Hence, $G[V(P)\cup \{x\}]\in \mathcal H(p/2+1, p/2+1)$ and (b) holds.
\end{proof}

\subsection{Proof of Lemma \ref{lem-xv1v2}}

Without loss of generality, we may assume $P=v_0v_1\ldots v_p$ is the reroute such that $\max\{i\mid v_i\in N(x)\}$ is maximized, where $v_0=u$.
Suppose that (c) is not true and we show that either (a) or (b) occurs. In fact we have the following claims.

\textbf{Claim 1.} $x$ is not absorbable to $P$.

Suppose, to the contrary, that $x$ is absorbable to $P$. If there exists $z\in N(\{w_1,w_2\})\cap V(P)$ such that $G[V(P-z)\cup V(Q)]$ contains a $u$-path of length $p$ then (c) holds. So by Lemma \ref{lem-PQw}, $e(\{x,w_1\}, L)=e(\{x,w_2\}, L)=p$, $N(w_1)\cap L=N(w_2)\cap L$, $xv_p\in E(G)$ and there exists $v_i,v_{i+1}\in N(w)$. If $v_{i-1}w_1\in E(G)$ then $uPv_{i-1}w_1v_{i+1}Pv_p$ is a desired $u$-path; and if $v_{i-1}w_1\notin E(G)$ then $v_{i-2}x\in E(G)$ and thus $uPv_{i-2}xv_pPv_{i+1}w_1$ is a desired $u$-path, a contradiction. Claim 1 is proved.

Assume $N(x)\cap L=\{v_{a_1},\ldots, v_{a_m}\}$ such that $1\leq a_1<\ldots <a_m\leq p$.  Then $a_{i+1}>a_i+1$ and $a_m<p$. Denote $a_0=0$, $a_{m+1}=p+1$ and for $i=0,1,\ldots,m$,
$$X_i=\{v_{a_i+1},\ldots, v_{a_{i+1}-1}\},$$
and $|X_i|=x_i$.

\textbf{Claim 2.} For any $1\leq i<m$, $e(\{w_1,w_2\}, X_i)+e(\{w_1,w_2\}, \{v_{a_i}, v_{a_{i+1}}\})/2\leq 2x_i$.

Suppose, to the contrary, that there is an $i$ with $1\leq i<m$ such that
\begin{equation}\label{eq-w1w2Xi}
e(\{w_1,w_2\}, X_i)>2x_i-e(\{w_1,w_2\}, \{v_{a_i}, v_{a_{i+1}}\})/2\geq 2x_i-2.
\end{equation}
Then there is a vertex in $\{w_1,w_2\}$, say $w_1$, such that $e(w_1,X_i)=x_i$. Then $x_i\geq2$, for otherwise, $uPv_{a_i}xv_{a_{i+1}}Pv_p$ is a desired path, a contradiction. Also, by \eqref{eq-w1w2Xi}, $e(w_2,X_i)\geq x_i-1$. Furthermore, if $e(w_2, X_i)=x_i$ then $w_1v_{a_i}\notin E(G)$, for otherwise, $uPv_{a_i}w_1v_{a_i+2}Pv_p$ is a desired path, a contradiction. Similarly, $w_2v_{a_i}, w_1v_{a_{i+1}}, w_2v_{a_{i+1}}\notin E(G)$. This is a contradiction to \eqref{eq-w1w2Xi}. So $e(w_2, X_i)=x_i-1$. Again by \eqref{eq-w1w2Xi}, $e(\{w_1,w_2\}, \{v_{a_i}, v_{a_{i+1}}\})\geq3$. Without loss of generality, we may assume that $w_1v_{a_i}, w_2v_{a_i}\in E(G)$. Then $v_{a_i+1}, v_{a_{i}+2}\notin N(w_2)$, a contradiction to \eqref{eq-w1w2Xi}. Claim 2 is proved.

By Claim 2,
$e(\{w_1,w_2\}, L\setminus (X_0\cup X_m))+e(\{w_1,w_2\}, \{v_{a_1}, v_{a_m}\})/2=\sum_{i=1}^{m-1}\big(e(\{w_1,w_2\}$, $X_i)+e(\{w_1,w_2\}, \{v_{a_i}, v_{a_{i+1}}\})/2\leq 2\sum_{i=1}^{m-1}x_i=2p-2e(x,L)$. It follows from $\lambda\geq0$ that
\begin{equation}\label{eq-w1w2X0Xm}
e(\{w_1,w_2\}, X_0\cup X_m)+e(\{w_1,w_2\}, \{v_{a_1}, v_{a_m}\})/2 \geq 2x_0+2x_m.
\end{equation}
We consider the value of $a_1$.

If $a_1=1$ then $X_0=\emptyset$ and $u\notin N(x)$ by Claim 1. Thus $Q$ has length at least 2. Then $v_p, v_{p-1}\notin N(\{w_1,w_2\})$ since $uQxv_1\dots v_{p-2}$ is a $u$-path of length at least $p$. This is a contradiction to \eqref{eq-w1w2X0Xm}.

If $a_1=2$ then $v_1\notin N(\{w_1,w_2\})$ since $uQxv_2\ldots v_p$ is a $u$-path of length at least $p$. By \eqref{eq-w1w2X0Xm}, $v_p, v_{p-1}\in N(w_1)\cap N(w_2)$ and then $uPv_{p_1}w_1$ is a path satisfying (3), a contradiction.

So we may assume $a_1\geq3$. Then $e(w_1,\{v_1,v_3\})+e(w_2,v_2)\leq 2$, for otherwise, $uv_1w_1w_3Pv_p$ is a desired path satisfying (3), a contradiction. Similarly, $e(w_2,\{v_1,v_3\})+e(w_1,v_2)\leq 2$. Thus $e(\{w_1,w_2\},X_0)+e(v_{a_1}, \{w_1, w_2\})/2\leq 2x_0-1$. By \eqref{eq-w1w2X0Xm}, $e(\{w_1,w_2\}, X_m)+e(v_{a_m}, \{w_1,w_2\})/2\geq 2x_m+1$.
This implies $v_p, v_{p-1}\in N(w_1)\cap N(w_2)$. However, $uPv_{p-1}w_1$ is a path satisfying (3), a contradiction. The proof is finished.

\subsection{Proof of Lemma \ref{lem-xPQ}}

Without loss of generality, we may assume $P=v_0v_1\ldots v_p$ is the reroute such that $\max\{i\mid v_i\in N(x)\}$ is maximized, where $v_0=u$.
Suppose that (c) is not true and we show that either (a) or (b) occurs. In fact we have the following claims.

\textbf{Claim 1.} $x$ is not absorbable to $P$.

Suppose this is not true. If there exists $z\in V(P)\cap N(w_i)$ ($i=1,2$) such that $G[V(P-z)\cup V(Q)]$ has a $u$-path of length at least $p$ then (c) holds. So we may assume that there is no such $z$. Then by Lemma \ref{lem-PQw}, $e(\{x,w_1\}, L)=e(\{x, w_2\},L)=p$, $N(w_1)\cap L=N(w_2)\cap L$, $xv_p\in E(G)$, and there exists $v_i,v_{i+1}\in N(w_1)\cap N(w_2)$. Thus $uPv_{i}w_1Qw_2v_{i+1}Pv_px$ is a $u$-path of length $p+q+1$. Noting that $V(Q)\cup L\cup \{x\}\subseteq N(u)$, it is easy to find two disjoint $u$-path with length $p$ and $q+1$, respectively, a contradiction. Claim 1 is proved.

Let $0=a_1\leq b_1<a_2\leq b_2<\ldots<a_m\leq b_m < a_{m+1}=p+1$ such that $N(x)\cap V(v_{b_i}Pv_{a_{i+1}})=\{v_{b_i}, v_{a_{i+1}}\}$, $a_i\geq b_i+3$ and $\sum_{i=1}^m (a_{i+1}-b_i)$ is as large as possible.
For $1\leq i\leq m$, let $X_i=V(v_{b_{i}+2}Pv_{a_{i+1}-2})$, $\bar X_i=\{v_{b_i+1}, v_{a_{i+1}-1}\}$ and $Y_{i}=V(v_{a_{i}}Pv_{b_{i}})$. Then $\sum_{i=1}^m (|X_i|+| Y_i|)=p-2m+1$. By Claim 2, it is easy to see that  $|Y_i|$ is odd for each $i$, and $N(x)\cap Y_i=\{v_{a_{i}}, v_{a_{i}+2},\ldots, v_{b_i}\}$. Thus $v_{a_{i}+1}, v_{a_{i}+3},\ldots$, $v_{b_i-1}\notin N(\{w_1,w_2\})$.
It follows that $e(\{w_1,w_2\}, Y_i\setminus \{u\})\leq 2e(x, Y_i\setminus \{u\})=|Y_i|+1-2|Y_i\cap \{u\}|$ for $1\leq i\leq m$.

\textbf{Claim 2.} For $1\leq i<j \leq m$, $e(\{w_1,w_2\}, \bar X_i\cup \bar X_j)\leq 4$.

If $w_1v_{b_{i}+1}, w_2w_{b_{j}+1}\in E(G)$ then $uPv_{b_{i}}xv_{b_j}Pv_{b_i+1}w_1Qw_2v_{b_{j}+1}Pv_p$ is a $u$-path of length $p+q+1$. Then by the assumption that $V(Q)\cap L\cup \{x\}\subseteq N(u)$, it is easy to find two disjoint $u$-path with length $p$ and $q+1$, a contradiction. So $e(w_1, v_{b_{i}+1})+e(w_2, v_{b_{j}+1})\leq 1$. By a similar reason, we have four inequalities like this. Thus $e(\{w_1,w_2\}, \bar X_i\cup \bar X_j)\leq 4$. 

By Claim 2, it is easy to see that $\sum_{i=1}^m e(\{w_1,w_2\},\bar X_i)\leq \max\{2m,4\}$. Then $\lambda=2e(x, L)+e(\{w_1,w_2\}, L)-2p\leq \sum_{i=1}^m (2e(x, Y_i\setminus \{u\})+e(\{w_1,w_2\},X_i\cup \bar X_i\cup Y_i\setminus \{u\})) -2p\leq \sum_{i=1}^{m}(2|X_i|+2|Y_i|+2-4|Y_i\cap \{u\}|) +\sum_{i=1}^m e(\{w_1,w_2\},\bar X_i) -2p \leq 2(p-2m+1)+2m-4+\max\{2m,4\}-2p\leq 0$. It follows that $\lambda=0$ and all the inequalities become equalities. Thus $m=1$ and $N(w_1)\cap L=N(w_2)\cap L$. Thus $e(w_1,L)=e(w_2,L)$. Also, by Claim 1, $e(x,L)<p/2$ and thus $e(w_1,L)=e(w_2,L)>p/2>e(x,L)$, implying (a). The proof is completed.

\subsection{Proof of Lemma \ref{lem-xvw}}

Without loss of generality, we may assume $P=v_0v_1\ldots v_p$ is the reroute such that $\max\{i\mid v_i\in N(x)\}$ is maximized, where $v_0=u$. Suppose, to the contrary, the result is not true. We have the following claim.

\textbf{Claim 1.} $x$ is not absorbable to $P$.

Suppose, to the contrary, that $x$ is absorbable to $P$. If there is a $w$-path $R$ of length 2 in $G[V(P)\cup \{v,w\}]$  such that $G[V(P)\cup V(Q)\setminus V(R)]$ has a $u$-path of length at least $p$, then (c) holds clearly. So we may assume that there is no such $R$.
Also, if there exists $z\in N(w)\cap L$ such that $G[V(P-z)\cup \{x\}]$ contains a $u$-path $P'$ of length $p$ then $P'$ and $vwz$ are two paths satisfying (c). So we may assume there is no such $z$.
By Lemma \ref{lem-PQw}, $v_p\in N(x)$, $e(\{x,v\}, L)=e(\{x,w\}, L)=p$, $N(v)\cap L=N(w)\cap L=\{v_i\mid v_{i-1}\notin N(x)\}$ and there exists $v_i,v_{i+1}\in N(w)$. Thus $e(v,S)=0$, implying $v_{i-1}\notin N(v)$. It follows that $v_{i-2}\in N(x)$. However, $uPv_{i-2}xv_pPv_{i+1}v$, $wv_iv_{i-1}$ are desired two paths, a contradiction.  Claim 1 is proved.

Define $a_0=0$ if $v_1\notin N(x)$ and $a_0=1$ if otherwise.
By Claim 1, assume $N(x)\cap (L\setminus \{v_{a_0}\})=\{v_{a_1},\ldots, v_{a_m}\}$ such that $a_0< a_1<\ldots <a_m<a_{m+1}=p+1$ and $a_{i+1}>a_i+1$.
Let $X_0=V(v_{a_{0}+1}Pv_{a_1-1})$, $\bar X_0=X_0\setminus \{v_{a_1-1}\}$, $Y_0=\{v_{a_0}\}\cap L$ and $Y_{m+1}=\emptyset$
and for $i=1,\ldots, m$, let $X_i=V(v_{a_i+1}Pv_{a_{i+1}-1})$, $\bar X_i=X_i\setminus (Y_i\cup Y_{i+1})$, $Y_i=\{v_{a_i-1}, v_{a_i}, v_{a_i+1}\}$. 
Then $\bigcup_{i=0}^m \bar X_i\cup Y_i=L$ and $\bigcup_{i=0}^m X_i\cup (N(x)\cap L)=L$. For $i=0,\ldots, m$, let $|X_i|=x_i$ and
$$
\begin{array}{ll}
f(i)=\begin{cases}
e(v, Y_i)+e(v,Y_{i+1})+e(w, \{v_{a_i}, v_{a_{i+1}}\}\cap L)-e(v, S\cap X_i) &\mbox{ if }i\geq1,\\
2e(\{v,w\}, Y_0)+e(v, Y_1)+e(w, v_{a_1})-e(v, S\cap X_0) &\mbox{ if }i=0,
\end{cases}\\
g(i)=2e(v,\bar X_i)+2e(w, X_i)+f(i)-4x_i.
\end{array}$$
Then $\sum_{i=0}^m g(i)=2e(v, L)+2e(w,L)+\sum_{i=0}^m e(v, S\cap X_i)-4(p-m-a_0)\geq 4e(x, L)+2e(\{v,w\}, L)-e(v,S)-4p\geq0$. In the following, we shall consider each $g(i)$.

We need a further notation in the rest of our proof.
Let $\mu(i,j)=1$ if $v$ is strictly absorbable to $v_iPv_j$ and let $\mu(i,j)=0$ if otherwise.
Then we have the following claim.

\textbf{Claim 2.} For $1\leq i\leq m$, $e(v, Y_i)+e(w, v_{a_i})\leq 2+\mu(a_i-1, a_i+1)$.

In fact, if $v_{a_i-1}, v_{a_i+1}\in N(v)$ and $v_{a_i}\in N(w)$ then $uPv_{a_i-1}vv_{a_i+1}$ $Pv_p$ and $wv_{a_i}x$ are two desired paths, a contradiction. Thus $e(v, Y_i)+e(w, v_{a_i})\leq 2+e(v, a_i)$. So if the result is not true then $\mu(a_i-1, a_i+1)<e(v, v_{a_i})$. It follows that $e(v, v_{a_i})=1$ and $\mu(a_i-1, a_i+1)=0$. Thus $v_{a_i-1}, v_{a_i+1}\notin N(v)$ and the result holds.

\textbf{Claim 3.} For $0< i<m$, if $x_i=1$ and $g(i)>0$ then $g(i)\leq 2$ and $e(v, \{v_{a_i-1}, v_{a_i}, v_{a_{i+1}}$, $v_{a_{i+1}+1}\})\geq3$.

In this case, by noting that $uPv_{a_i}xv_{a_{i+1}}Pv_p$ is a $u$-path of length $p$, $v_{a_i+1}\notin N(\{v,w\})$. It follows that $g(i)= f(i)-4\leq 2$ and $e(v,\{v_{a_i-1}, v_{a_i}, v_{a_{i+1}}, v_{a_{i+1}+1}\})=e(v, Y_i\cup Y_{i+1})\geq f(i)-2\geq3$. Claim 3 is proved.

\textbf{Claim 4.} For $0< i<m$, if $x_i\geq2$ and $g(i)>0$ then $X_i\subseteq N(w)$, $\bar X_i\subseteq N(v)$ and $g(i)\leq 2$. Furthermore, if $x_i=2$ then $g(i)\leq 1$ and one of $v_{a_i}, v_{a_i+1}\in N(v)$ and $v_{a_{i+1}-1}, v_{a_{i+1}}\in N(v)$ holds.

By Claim 2, we see that $f(i)\leq 6$. If $g(i)>0$ then $e(v,\bar X_i)+e(w, X_i)>2x_i-f(i)/2\geq 2x_i-3$. This forces $\bar X_i\subseteq N(v)$ and $X_i\subseteq N(w)$. Thus $g(i)=f(i)-4\leq 2$.

Moreover, we consider $x_i=2$. If $\{v_{a_i}, v_{a_i+1}\},\{v_{a_{i+1}-1}, v_{a_{i+1}}\}\not\subseteq N(v)$ then by the assumption that $g(i)>0$ and without loss of generality, we may assume that $v_{a_i-1}\in N(v)$ since $f(i)=g(i)+4\geq5$. Then $v_{a_i}\notin N(v)$ and $e(v,v_{a_{i}+1})+e(w, v_{a_i})\leq 1$. Thus $v_{a_{i+1}+1}\in N(v)$ and then $v_{a_{i+1}}\notin N(v)$ and $e(v,v_{a_{i+1}-1})+e(w, v_{a_{i+1}})\leq 1$. It follows that $f(i)\leq 4$, a contradiction. So, without loss of generality, we may assume that $v_{a_i}, v_{a_i+1}\in N(v)$. Then $v_{a_i-1}\notin N(v)$, $e(v, S\cap X_i)\geq e(v, v_{a_i+2})$ and $e(v,\{v_{a_{i+1}},v_{a_{i+1}+1}\})\leq 1$. It follows that $g(i)=f(i)-4\leq 1$. Also, if $v_{a_{i+1}-1}, v_{a_{i+1}}\in N(v)$ then $v_{a_i+1}, v_{a_i+2}\in S$ and $e(v, S\cap X_i)=2$. Thus $g(i)=f(i)-4\leq 0$, a contradiction. Claim 4 is proved.

\textbf{Claim 5.} For $0< i<m$, if $x_i\geq3$ and $g(i)>0$ then $x_i=3$ and $g(i)\leq \min\{e(v,\{v_{a_i-1}$, $v_{a_{i+1}+1}\}), \mu(a_i-1, a_i+1)+ \mu(a_{i+1}-1, a_{i+1}+1)\}$.

By Claim 4, $X_i\subseteq N(w)$ and $\bar X_i\subseteq N(v)$. If $x_i\geq4$ then $v_{a_i-1}\notin N(v)$, for otherwise, $uPv_{a_i-1}vv_{a_{i+1}-3}Pv_{a_i}xv_{a_{i+1}}Pv_p$ and $wv_{a_{i+1}-1}v_{a_{i+1}-2}$ are desired two paths, a contradiction. Similarly, $v_{a_{i+1}+1}\notin N(v)$. Also, if $v_{a_i+1}\in N(v)$ then $v_{a_i+2}\in S$ and if $v_{a_{i+1}-1}\in N(v)$ then $v_{a_{i+1}-2}\in S$. It follows that $f(i)\leq 4$ and then $g(i)=f(i)-4\leq 0$, a contradiction. So $x_i=3$ and by Claim 2, $g(i)=f(i)-4\leq \mu(a_i-1, a_i+1)+ \mu(a_{i+1}-1, a_{i+1}+1)$.

If $v_{a_i-1}, v_{a_{i+1}+1}\in N(v)$ then the result holds clearly. So without loss of generality, we  may assume that $v_{a_{i+1}+1}\notin N(v)$. If $v_{a_{i}-1}\in N(v)$ then $e(v,v_{a_i+1})+e(w,v_{a_i})\leq 1$ and $v_{a_i+3}\in S$ (since $uPv_{a_{i}-1}vv_{a_i+2}Pv_{a_i}xv_{a_{i+1}}Pv_p$ is a $u$-path of length $p+1$). Thus $f(i)\leq 5$ and $g(i)=f(i)-4\leq 1=e(v, \{v_{a_i-1}, v_{a_{i+1}+1}\})$. If $v_{a_{i}-1}\notin N(v)$ then we see that $v_{a_i+1}\in S$ if $v_{a_i}, v_{a_i+1}\in N(v)$, and $v_{a_{i+1}-1}\in S$ if $v_{a_{i+1}}, v_{a_{i+1}-1}\in N(v)$. Then it is easy to see that $f(i)\leq 4$ and $g(i)=f(i)-4\leq 0$, a contradiction. Claim 4 is proved.

\textbf{Claim 6.} $\sum_{i=1}^{m-1}g(0)\leq 2$.

Suppose this is not true. Then by Claims 3,4,5, there exists $i,j$ such that $g(i),g(j)>0$. If $x_i=2$ then by noting that $P'=uPv_{a_i}xv_{a_{i+1}}Pv_p$ is a $u$-path of length $p-1$, $v$ is not absorbable to $P'$. By Claims 3,4,5, $x_j\in\{1,3\}$, $|j-i|= 1$ and $g(j)\leq 1$. Then there must exist another $k$ such that $g(k)>0$. Similarly, we have $|k-i|=1$ $x_k\in \{1,3\}$. Thus by Claims 3, 5, we see that $v_{a_i},v_{a_i+1},v_{a_i+2},v_{a_i+3}\in N(v)$, a contradiction to Claim 4.
So we may assume that $x_i,x_j\neq 2$. If $x_i=x_j=3$ then by Claim 4, $uPv_{a_i+2}vv_{a_j+2}Pv_{a_{i+1}}xv_{a_{j+1}}Pv_p$ and $v_{a_{i+1}-1}wv_{a_{j+1}-1}$ are desired two paths. So we may assume that $x_i=1$. By Claim 2 and without loss of generality, we may assume that $v_{a_i-1}\in N(v)$ and $v_{a_i}\in N(w)$.

Then $v_{a_j-1}\notin N(v)$, for otherwise, $uPv_{a_i-1}vv_{a_j-1}Pv_{a_{i+1}-1}xv_{a_j}Pv_p$ and $wv_{a_i}v_{a_i+1}$ are desired two paths, a contradiction. Thus, by Claim 3 or Claim 5, $g(j)=1$ and $v_{a_{j+1}+1}\in N(v)$. Thus, $e(v,v_{a_{i+1}+1})+e(w, v_{a_{i+1}})\leq 1$, for otherwise, $uPv_{a_i}xv_{a_{j+1}}Pv_{a_{i+1}}vv_{a_{j+1}+1}Pv_p$ and $wv_{a_{i+1}}v_{a_{i+1}-1}$ are desired two paths, a contradiction. Then $g(i)=1$. By the contradiction assumption, there exists $k$ such that $g(k)>0$. By a similar analysis, we also have $g(k)=1$ and $v_{a_k-1}\notin N(v)$, $v_{a_{k+1}+1}\in N(v)$. Note that it is not possible that $x_j=x_k=3$. Without loss of generality, we may assume that $x_k=1$. Then by Claim 4, $v_{a_{k+1}}\in N(w)$ and thus $uPv_{a_{j+1}}xv_{a_k}Pv_{a_{j+1}+1}vv_{a_{k+1}+1}Pv_p$ and $wv_{a_{k+1}}v_{a_{k+1}-1}$ are desired two paths, a contradiction. Claim 6 is proved.

\textbf{Claim 7.} $\sum_{i=0}^{m-1}g(i)\leq 2a_0\cdot e(v_1,\{v,w\})+e(v,v_{a_0+1})+\mu(a_0,a_m)-1$.

Suppose that this is not true. Noting that $a_0\cdot e(v_1, \{v,w\})=e(\{v,w\}, Y_0)$, we have
 $g(0)\geq 2e(\{v,w\}, Y_0)+e(v,v_{a_0+1})+\mu(a_0, a_m)-\sum_{i=1}^{m-1}g(i)$.
Then by the definition of $g(0)$ and by Claim 2, we see that
$$\displaystyle 2e(v,\bar X_0)+2e(w, X_0)+e(v,Y_1)+e(w,v_{a_1})\geq 4x_0+e(v, S\cap X_0)+e(v, v_{a_0+1})+\mu(a_0, a_m)-\sum_{i=1}^{m-1}g(i).$$
If $x_0=1$ then $v_{a_0+1}\notin N(v)\cup N(w)$ and thus by Claim 2, $2+\mu(a_1-1, a_1+1)\geq e(v, Y_1)+e(w, v_{a_1})\geq 4-\sum_{i=1}^{m-1}g(i)+\mu(a_0,a_m)$. If $\mu(a_0, a_m)=0$ then by Claims 3,4,5, $g(i)\leq 0$ for $0<i<m-1$ and $g(m-1)\leq 1$, a contradiction. So $\mu(a_0, a_m)=1$ and then $v_{a_1}\in N(v)\cap N(w)$, $v_{a_1+1}\in N(v)$, $\sum_{i=1}^{m-1} g(i)=2$. Thus there exists $i>1$ such that $g(i)>0$. If $x_i=2$ then by noting that $v$ is strictly absorbable to $uPv_{a_1}$, we see that $i=1$ and there is $j>2$ such that $g(j)>0$. Then by Claims 3,4 5, $v$ is absorbable to $uPv_{a_i}xv_{a_{i+1}}Pv_p$, which yields two disjoint paths satisfying (3), a contradiction. So $x_i=1$ or $x_i=3$. If $v_{a_{i+1}+1}\in N(v)$ then $uv_{a_0}xv_{a_{i+1}}Pv_{a_1+1}vv_{a_{i+1}+1}Pv_p$ and $wv_{a_1}v_{a_1-1}$ are desired two paths, a contradiction. So $v_{a_{i+1}+1}\notin N(v)$. Then by Claims 3,5, $v_{a_i-1}\in N(v)$ and $g(i)=1$. Thus there exists $j\neq i$ such that $g(j)>0$. Then, similarly, we have $v_{a_j-1}\in N(v)$, $v_{a_{j+1}+1}\notin N(v)$. If $x_i=x_j=3$ then $uPv_{a_i+2}vv_{a_j+2}Pv_{a_{i+1}}xv_{a_{j+1}}Pv_p$ and $v_{a_i+3}vv_{a_j+3}$ are desired two paths. So without loss of generality, we may assume that $x_j=1$. Then $uPv_{a_i-1}vv_{a_{j}-1}Pv_{a_i}xv_{a_j}Pv_p$ and $ww_{a_j}v_{a_j+1}$ are desired two paths, a contradiction. So we may assume that $x_0\geq2$.

By noting that $\mu(a_1-1, a_1+1)\leq \mu(a_0,a_m)$ as long as $m\geq2$, we see that
$2e(v, \bar X_0)-e(v, v_{a_0+1})+e(w, X_i)\geq 4x_0+e(v, S\cap X_0)+\mu(a_0,a_m)-\mu(a_1-1,a_1+1)-2-\sum_{i=1}^{m-1}g(i)\geq 4x_0-4.$
Thus $\bar X_0\setminus \{v_{a_0+1}\}\subseteq N(v)$ and $X_0\subseteq N(w)$. Thus
\begin{equation}\label{eq-all-g}
e(v, v_{a_0+1})+\sum_{i=1}^{m-1} g(i)\geq e(v, S\cap X_0)+\mu(a_0,a_m)-\mu(a_1-1,a_1+1)+2.
\end{equation}
So there exists $i\geq1$ such that $g(i)>0$, for otherwise, $v_{a_0+1}\in N(v)$, $e(v, S\cap X_0)=0$, $\mu(a_0, a_m)=1$ and $\mu(a_1-1,a_1+1)=1$. Thus $m=1$, $x_0=2$ and $v_{a_1}, v_{a_1+1}\in N(v)$. Then $uPv_{a_0}xv_{a_1}vv_{a_1+1}Pv_p$ and $wv_{a_0+1}v_{a_0+2}$ are desired, a contradiction. Then $\mu(a_i-1, a_{i+1}+1)=1$.

If $x_0=2$ then $i=1$, $v_{a_1-1}, v_{a_1}\in N(v)$ and $v_{a_2+1}\notin N(v)$, for otherwise, the path obtained from $uPv_{a_0}xv_{a_1}Pv_p$ by absorbing $v$ and $wv_{a_0}v_{a_0+1}$ are desired two paths, a contradiction. Thus $g(1)=1$.
If $v_{a_0+1}\in N(v)$ then $v_{a_0+2}\in S$ and $e(v, S\cap X_0)\geq1=e(v,v_{a_0+1})$. Thus by \eqref{eq-all-g}, there exists $j>1$ such that $g(j)>0$. Thus $\mu(a_{j}-1, a_j+1)=1$, again a contradiction. So we may assume that $x_0\geq3$. Then $\sum_{i=1}^{m-1}g(i)=2$ (otherwise, by \eqref{eq-all-g}, $v_{a_0+1}\in N(v)$, $e(v, S\cap X_0)=0$ and $\mu(a_1-1, a_1+1)=\mu(a_0, a_m)=1$ and thus either $v_{a_0+2}\in N(v)$ or $v_{a_0+1}\in S$ and then $e(v, S\cap X_0)\geq1$, a contradiction) and either $v_{a_0+1}\in N(v)$ or $v_{a_1+1}\in N(v)$
(otherwise, by \eqref{eq-all-g} we see that $e(v, S\cap X_0)=0$ and $\mu(a_1-1, a_1+1)=1$. Thus $v_{a_1-1}, v_{a_1}\in N(v)$ and $v_{a_1-1}\in S$ and $e(v, S\cap X_0)\geq1$, a contradiction).

If $x_i=3$ then by Claim 5, $uPv_{a_0}xv_{a_i}Pv_{a_0+2}vv_{a_{i}+2}Pv_p$ and $v_{a_0+1}wv_{a_i+1}$ are desired two paths, a contradiction. If $x_i=2$ then $x_0=3$ and either $v_{a_i+1}\in N(v)$ or $v_{a_i+2}\in N(v)$ by Claim 4. Thus either $uPv_{a_0+2}vv_{a_i+1}Pv_{a_1}xv_{a_{i+1}}Pv_p$ and $v_{a_1-1}wv_{a_i+2}$ or $uPv_{a_0}xv_{a_i}Pv_{a_0+2}vv_{a_{i}+2}Pv_p$ and $v_{a_0+1}vv_{a_i+1}$ are desired two paths, a contradiction. Hence, $x_i=1$.

If $v_{a_{i+1}+1}\in N(v)$ then since $uPv_{a_t}xv_{a_i}Pv_{a_t+1}vv_{a_{i+1}+1}Pv_p$ for some $t\in\{0,1\}$ is a $u$-path of length $p$, $v_{a_{i+1}}\notin N(w)$ and thus $g(i)=1$. Then $v_{a_i-1}\in N(v)$ and there is another $j\neq i$ such that $g(j)>0$. Similarly, we also have $x_j=1$ and either $v_{a_{j+1}+1}\notin N(v)$ or $v_{a_{j+1}}\in N(w)$. Then $v_{a_j-1}\in N(v)$ and thus $uPv_{a_i-1}vv_{a_j-1}Pv_{a_{i+1}}xv_{a_j}Pv_p$ and $wv_{a_i}v_{a_{i}+1}$ are desired two paths, a contradiction. Thus $v_{a_{i+1}+1}\notin N(v)$. Then $g(i)=1$ and $v_{a_i-1}\in N(v)$ and there is $j\neq i$ such that $g(j)>0$. Similarly, $x_j=1$ and $v_{a_j-1}\in N(v)$. Then $uPv_{a_i-1}vv_{a_j-1}Pv_{a_{i+1}}xv_{a_j}Pv_p$ and $wv_{a_i}v_{a_i+1}$ are desired two paths, a contradiction. The claim is proved.

By Claim 6, and by the assumption that $\sum_{i=0}^m g(i)\geq0$, we see that
\begin{equation}\label{eq-vwP-Xm}
\begin{array}{ll}
 & 2e(v,\bar X_m)+ 2e(w,X_m)+e(v, Y_m)+e(w, v_{a_m})+e(v, v_{a_0+1}) \\
& \qquad\qquad\qquad \geq 4x_m+e(v, S\cap X_m)-2a_0\cdot e(v_1, \{v,w\})-\mu(a_0, a_m)+1.
\end{array}
\end{equation}

\textbf{Case 1.} $x_m=1$.

In this case, noting that $uPv_{p-1}x$ is a $u$-path of length of length $p$, $v_p\notin N(\{v,w\})$.
If $a_0=1$ then $Q$ has length at least two and thus by noting that $uQxv_{p-1}Pv_2$ is a $u$-path of length at least $p$, $e(v_1,\{v,w\})=0$.
So we have $a_0\cdot e(v_1, \{v,w\})=0$ in any case. By \eqref{eq-vwP-Xm}, $e(v, Y_m)+e(w, v_{a_m})+e(v, v_{a_0+1})\geq 5-\mu(a_0, a_m)\geq 4$. Thus $v_{a_0+1}\in N(v)$, $v_{a_m-1}\in N(v)$ and $v_{a_m}\in N(w)$. Then $uPv_{a_0}xv_{a_1}Pv_{a_0+1}vv_{a_m-1}Pv_{a_1+1}$ and $wv_{p-1}v_p$ are desired two paths as long as $m\geq2$. So we may assume $m=1$. Then by $\sum_{i=0}^m g(i)\geq 0$, $X_0\cup \{v_{a_m}\}\subseteq N(v)\cap N(w)$ and $e(v, S)=0$.  This forces $x_0=1$ and $v_{a_0+1}\in N(v)$, a again a contradiction.

\textbf{Case 2.} $x_m=2$.

In this case, if $a_0=1$ then $Q$ has length at least 2 and by noting that $uQxv_1Pv_{p-2}$ is a $u$-path of length at least $p$, $v_{p-1}, v_p\notin N(\{v,w\})$. By \eqref{eq-vwP-Xm}, $e(v, Y_1)+e(w,v_{a_m})+e(v, v_{a_0+1})\geq4$. It follows that $v_{a_0+1}, v_{a_m-1}\in N(v)$ and $v_{a_m}\in N(w)$.
Thus $uQxv_1v_2vv_{a_m-1}Pv_{3}$ and $wv_{p-2}v_{p-1}$ are desired two paths, a contradiction. So we may assume $a_0=0$.

Then $2e(w, X_m)\geq 4x_m-2e(v, \bar X_m)-e(v, Y_m\cup \{v_{a_0+1}\})-e(w, v_{a_m})\geq 2x_m-2$. This implies $|X_m\setminus N(w)|\leq 1$. Then $e(v, v_{a_0+1})=0$ and $\mu(a_0, a_m)=0$, for otherwise, $uxv_{a_m}Pv_1v$ or the path obtained from $uPv_{a_m}x$ by absorbing $v$ is a $u$-path of length $p$ and $G[\{w, v_{p-1}, v_p\}]$ contains a path of length 2, a contradiction. Then by \eqref{eq-vwP-Xm} $e(v, \{v_{a_m-1}, v_{a_m}\})\leq 1$ and $2e(w, X_m)+2e(v,\bar X_m) + e(v, v_{p-1})\geq 4x_m-1-e(v, S\cap X_m)$. It follows that $v_{p-1},v_p\in N(w)\cap N(v)$ and $e(v, S\cap X_m)=0$, a contradiction to the fact $v_p\in S$ (since $uPv_{p-1}v$ is a $u$-path of length $p$).

\textbf{Case 3.} $x_m\geq 3$.

If $a_0=1$ then $Q$ has length at least 2 and by noting that $uQxv_1Pv_{p-2}$ is a $u$-path of length $p$, $v_{p-1}, v_p\notin N(\{v,w\})$. Thus $e(v,\bar X_m)\leq x_m-3$ and $e(w,X_m)\leq x_m-2$. Then by \eqref{eq-vwP-Xm}, $e(v, Y_m)+e(w, v_{a_m})+e(v,v_{a_0+1})\geq 6$, a contradiction. So we may assume $a_0=0$.

Then by \eqref{eq-vwP-Xm} and by Claim 2, $e(v, \bar X_m)+e(w, X_m)\geq 2x_m-2$. This implies $|X_m\setminus N(w)|+|\bar X_m\setminus N(v)|\leq 1$.
If $v_{a_0+1}\in N(v)$ then $v_p\notin N(v)$, for otherwise, $uxv_{a_m}Pv_1vv_pPv_{a_m+3}$ and $wv_{a_m+1}v_{a_m+2}$ are desired two paths, a contradiction. Thus $\bar X_m\cup \{v_{a_0+1}\}\setminus \{v_{p}\}\subseteq N(v)$ and $X_m\subseteq N(w)$. However, $uxv_{a_m}Pv_1vv_{p-1}Pv_{a_m+2}$ and $v_pwv_{a_m+1}$ are desired two paths, a contradiction. Hence, $v_{a_0+1}\notin N(v)$. Then by \eqref{eq-vwP-Xm}, $2e(v, \bar X_m)+2e(w, X_i)\geq 4x_m+e(v, S\cap X_m)-3$. It follows that $\bar X_m\subseteq N(v)$ and $X_m\subseteq N(w)$. Then $v_p\in S$ and $e(v, S\cap X_m)\geq1$. Thus the equation holds. This forces $v_{p-1}\notin S$ and thus $v_{p-2}\notin N(v)$, $x_m=3$ and $\mu(a_m-1,a_m+1)=\mu(a_0,a_m)=1$. Then $v_{a_m-1}, v_{a_m}\in N(v)$ and $v_{a_m}\in N(w)$. However, $uPv_{a_m-1}vv_pPv_{a_m+1}$ and $wv_{a_m}x$ are desired two paths, a contradiction.

The proof is completed.

\end{document}